\numberwithin{equation}{section}
\newcommand{\inner}[2]{\langle #1, #2 \rangle}
\newtheorem{theorem}{Theorem}[section]
\newtheorem{corollary}[theorem]{Corollary}
\theoremstyle{definition}
\newtheorem{example}[theorem]{Example}
\numberwithin{equation}{section}
\begin{document}

\renewcommand{\thefootnote}{\arabic{footnote}}

\begin{center}
{\Large \textbf{A  quantitative CLT on a finite   sum of
Wiener chaoses and applications to   ratios of Gaussian functionals }} \\[0pt]
~\\[0pt]
Khalifa Es-Sebaiy\footnote{%
Department of Mathematics, Faculty of Science, Kuwait University,
Kuwait. Email: \texttt{khalifa.essebaiy@ku.edu.kw}} 
\\[0pt]
\textit{Department of Mathematics, Faculty of Science, Kuwait University }\\[0pt]
~\\[0pt]
\end{center}

{\ \noindent \textbf{Abstract:} In this paper we provide a new explicit bound  on the total variation distance
between a standardized partial sum of random variables belonging to a finite sum of Wiener chaoses and a standard normal random variable. 
 We apply our result to derive an upper bound  for the
Kolmogorov  distance between  a ratio    of multiple stochastic integrals and a
  Gaussian random variable.
\vspace*{0.1in}}

{\ \noindent }\textbf{Mathematics Subject Classifications (2020)}:  60G15; 60F05; 60H07.

{\ \noindent \textbf{Key words}:  Quantitative central limit theorem;
 finite sum of Wiener chaoses;   ratio   of multiple stochastic integrals.}

\section{\textbf{Introduction}} 
Fix integers $d\geq1, \ N\geq2$, and consider a  random variable $F$ expressed as a   partial sum   of  multiple stochastic integrals:
\begin{align}F=\sum_{p=d}^NI_p(f_{p})\, \mbox{ with }  f_{p}\in {\mathcal{H}}^{\odot
p}, \ p=d,\ldots,N,\label{decomp-F-intro}\end{align} where  ${\mathcal{H}}$ is a  Hilbert space associated to   an isonormal Gaussian process  $W=\{W(h): h~\in~{\mathcal{H}}\}$, and 
${\mathcal{H}}^{\odot p}$ denotes the $p$th symmetric tensor power of ${\mathcal{H}}$.\\
Our main result, the forthcoming Theorem \ref{CLT-finite-sum}, states that if $F$ is a random variable  of the form  \eqref{decomp-F-intro}, then there exists a constant $C>0$ depending only
on $N$  such that
\begin{eqnarray}&&d_{TV}\left( \frac{F}{\sqrt{E\left[F^{2}\right] }}%
, \mathcal{N}(0,1)\right)\nonumber\\&& \leqslant \frac{C}{E\left[F^{2}\right]}
\left(\max_{\underset{1\leq r\leq p-1}{d\vee2\leq p\leq N}}\Vert
f_{p}\underset{r}{{\otimes }}f_{p}\Vert_{{\mathcal{\mathcal{H}}}%
^{\otimes {2p-2r}}}+\delta_{dN}\max_{d\leq p< q\leq N}\sqrt{\langle
f_{p} \otimes f_{p},f_{q} \underset{q - p}{{\otimes
}} f_{q}\rangle _{{\mathcal{\mathcal{H}}}%
^{\otimes 2p}}}\right),\label{Intro-main-ineq.1}
\end{eqnarray}
 where $\delta_{dN}=0$ if $d=N$, and $\delta_{dN}=1$ if $d\neq N$.\\
Note that the estimate \eqref{Intro-main-ineq.1} is an  extension of    \cite[Theorem 1]{EV}:
\begin{theorem}[{\cite[Theorem 1]{EV}}] Let $F=\sum_{k=2}^q I_k\left(f_k\right)$ with $q \geqslant 2$ is a fixed integer and $f_k \in \mathcal{H}^{\odot k}, k=2, \ldots, q$.  Then, there exists a constant $C>0$ depending only on $q$ such that
\begin{eqnarray}
d_{T V}\left(\frac{F}{\sqrt{E\left[F^2\right]}}, \mathcal{N}(0,1)\right) \leqslant \frac{C}{E\left[F^2\right]} \max _{2 \leqslant k \leqslant q}\Vert f_k\Vert_{\mathcal{H}^{\otimes k}} \times \max _{\substack{1 \leqslant s \leqslant k \\ 2 \leqslant k \leqslant q}}\sqrt{ \|f_k \underset{s}{{\otimes
}} f_k \|_{\mathcal{H}^{\otimes 2 k-2 s}}} .
\label{estimate-EV}
\end{eqnarray}
\end{theorem}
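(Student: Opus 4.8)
The plan is to run the Malliavin--Stein method. Write $\sigma^2=E[F^2]$ and note that $F$ is centred (its lowest chaos has order $\geq 2$) and, being a finite sum of Wiener chaoses, belongs to $\mathbb{D}^{1,2}$ with all moments finite. I would first invoke the Nourdin--Peccati total variation bound: for $N\sim\mathcal{N}(0,1)$, with $\tilde F=F/\sigma$, $D$ the Malliavin derivative and $L^{-1}$ the pseudo-inverse of the Ornstein--Uhlenbeck generator,
\[ d_{TV}\!\left(\frac{F}{\sigma},N\right)\leq 2\sqrt{\mathrm{Var}\!\left(\langle D\tilde F,-DL^{-1}\tilde F\rangle_{\mathcal{H}}\right)}=\frac{2}{\sigma^2}\sqrt{\mathrm{Var}\!\left(\langle DF,-DL^{-1}F\rangle_{\mathcal{H}}\right)}, \]
the last equality because $\langle\cdot,\cdot\rangle$ is bilinear in $F$. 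Everything then reduces to estimating $\mathrm{Var}(T)$, where $T:=\langle DF,-DL^{-1}F\rangle_{\mathcal{H}}$; recall $E[T]=E[F^2]=\sigma^2$.

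The second step expands $T$ into Wiener chaoses. Since $-DL^{-1}I_b(f_b)=I_{b-1}(f_b(\cdot,x))$ and $DI_a(f_a)=a\,I_{a-1}(f_a(\cdot,x))$, the product formula for multiple integrals gives
\[ T=\sum_{a,b=2}^q\;\sum_{r=0}^{(a-1)\wedge(b-1)} a\,r!\binom{a-1}{r}\binom{b-1}{r}\,I_{a+b-2-2r}\!\left(f_a\widetilde{\otimes}_{r+1}f_b\right), \]
where $\widetilde{\otimes}$ denotes the symmetrised contraction. The order-zero chaos arises only for $a=b$, $r=a-1$ and reproduces $\sum_{a}a!\|f_a\|^2=E[F^2]=\sigma^2$; subtracting it leaves $T-\sigma^2=\sum_{m\geq 1}I_m(g_m)$, where each kernel $g_m$ is a $q$-dependent linear combination of symmetrised contractions $f_a\widetilde{\otimes}_s f_b$ of total order $m$ with $1\leq s\leq a\wedge b$ but $s<a$ or $s<b$.

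The third step computes the variance by chaos orthogonality, $\mathrm{Var}(T)=\sum_{m\geq1}m!\,\|g_m\|^2$, and bounds each resulting inner product $\langle f_a\widetilde{\otimes}_s f_b,\,f_{a'}\widetilde{\otimes}_{s'}f_{b'}\rangle$. The workhorses are $\|f\widetilde{\otimes}_s g\|\leq\|f\otimes_s g\|$, the contraction identity $\|f_a\otimes_s f_b\|^2=\langle f_a\otimes_{a-s}f_a,\,f_b\otimes_{b-s}f_b\rangle$, and Cauchy--Schwarz, which together collapse every surviving term into self-contraction norms $\|f_k\otimes_r f_k\|$ with $1\leq r\leq k-1$. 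Finally, using $\|f_k\otimes_r f_k\|\leq\|f_k\|^2$ together with the elementary splitting $\|f_k\otimes_r f_k\|=\sqrt{\|f_k\otimes_r f_k\|}\cdot\sqrt{\|f_k\otimes_r f_k\|}\leq\|f_k\|\,\sqrt{\|f_k\otimes_r f_k\|}$, one factors out of every term exactly one copy of $\max_{2\leq k\leq q}\|f_k\|$ and one copy of $\max_{1\leq r\leq k-1,\,2\leq k\leq q}\sqrt{\|f_k\otimes_r f_k\|}$. This yields $\sqrt{\mathrm{Var}(T)}\leq C(q)\,\max_{2\leq k\leq q}\|f_k\|\cdot\max\sqrt{\|f_k\otimes_s f_k\|}$, and inserting it into the first step gives the claimed estimate \eqref{estimate-EV}.

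The main obstacle is the bookkeeping of the cross-chaos terms $a\neq b$ in the variance expansion: one must check that each of the finitely many inner products $\langle f_a\widetilde{\otimes}_s f_b,\,f_{a'}\widetilde{\otimes}_{s'}f_{b'}\rangle$ that survive orthogonality (i.e.\ those with matching total order $a+b-2s=a'+b'-2s'$) is dominated, via the contraction identity and Cauchy--Schwarz, by a product of one full norm and one self-contraction norm of the advertised type, with all combinatorial constants safely absorbed into $C=C(q)$. A secondary point is that, because the sum starts at order $2$, every contraction order occurring in the non-constant part satisfies $1\leq s\leq k-1$, so the identity always applies and no first-chaos (Gaussian) degeneracy interferes. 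I would finally remark that \eqref{estimate-EV} is in fact subsumed by the sharper bound \eqref{Intro-main-ineq.1}, since $\|f_k\otimes_r f_k\|\leq\|f_k\|\sqrt{\|f_k\otimes_r f_k\|}$.
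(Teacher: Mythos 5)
Your proof is correct, but it follows a genuinely different route from the one this paper takes for this statement. The paper does not reprove \eqref{estimate-EV} from scratch: it deduces it in one line from the sharper estimate \eqref{Intro-main-ineq.1} of Theorem \ref{CLT-finite-sum}, by applying Cauchy--Schwarz to the mixed term, $\sqrt{\langle f_p\otimes f_p,\,f_q\otimes_{q-p}f_q\rangle_{\mathcal{H}^{\otimes 2p}}}\leq \|f_p\|_{\mathcal{H}^{\otimes p}}\sqrt{\|f_q\otimes_{q-p}f_q\|_{\mathcal{H}^{\otimes 2p-2q}}}$ (using $\|f_p\otimes f_p\|_{\mathcal{H}^{\otimes 2p}}=\|f_p\|^2_{\mathcal{H}^{\otimes p}}$), together with the splitting $\|f_p\otimes_r f_p\|\leq\|f_p\|\sqrt{\|f_p\otimes_r f_p\|}$ that you also use. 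What you carry out instead is essentially the original argument of \cite{EV}, and it closely parallels the paper's proof of Theorem \ref{CLT-finite-sum} itself: the same starting point \eqref{ineq1} from \cite[Proposition 2.4]{NP2013} (your variance form $d_{TV}\leq 2\sqrt{\mathrm{Var}(\langle D\tilde F,-DL^{-1}\tilde F\rangle_{\mathcal{H}})}$ is the paper's $E|\cdot|$ bound followed by Cauchy--Schwarz), and the key identity $\|f_a\otimes_s f_b\|^2=\langle f_a\otimes_{a-s}f_a,\,f_b\otimes_{b-s}f_b\rangle$. Two comparative remarks. First, the paper avoids your main bookkeeping burden (cross inner products $\langle f_a\widetilde{\otimes}_s f_b, f_{a'}\widetilde{\otimes}_{s'}f_{b'}\rangle$ between different chaos pairs of matching order) by first applying the triangle inequality to reduce to the pairwise quantities $\mathrm{Var}(\frac1p\|DI_p(f_p)\|^2_{\mathcal{H}})$ and $E[(\frac1q\langle DI_p(f_p),DI_q(f_q)\rangle_{\mathcal{H}})^2]$, whose chaos expansions are already packaged in \cite[Lemma 3.1]{NPP}; your direct expansion of the full $\langle DF,-DL^{-1}F\rangle_{\mathcal{H}}$ is equivalent but heavier. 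Second, and more substantively, in the boundary case $s=a<b$ you immediately bound $\langle f_a\otimes f_a,\,f_b\otimes_{b-a}f_b\rangle\leq\|f_a\|^2\,\|f_b\otimes_{b-a}f_b\|$; this is exactly the step the paper refuses to take in Theorem \ref{CLT-finite-sum}, where the inner product is kept intact, and by \cite[Remark 12]{DEAV} this retained term can be strictly smaller than its Cauchy--Schwarz majorant. So your argument buys a self-contained proof of the stated theorem, while the paper's arrangement buys the stronger bound first and obtains \eqref{estimate-EV} as an immediate corollary; your closing remark that \eqref{estimate-EV} is subsumed by \eqref{Intro-main-ineq.1} is precisely the paper's stated derivation, except that the Cauchy--Schwarz step is needed for the mixed term as well, not only for the diagonal contractions.
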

More precisely, the estimate \eqref{estimate-EV} can be  immediately obtained from \eqref{Intro-main-ineq.1} by applying  Cauchy-Schwarz inequality to the term  
$\sqrt{\langle
f_{p} \otimes f_{p},f_{q} \underset{q - p}{{\otimes
}} f_{q}\rangle _{{\mathcal{\mathcal{H}}}%
^{\otimes 2p}}}$ and using the fact that $\Vert f_{p} \otimes f_{p}\Vert_{\mathcal{\mathcal{H}}^{\otimes 2p}}= \left\Vert f_p\right\Vert^2_{\mathcal{H}^{\otimes p}}$. 

To motivate our results, let us briefly consider the particular case of \eqref{Intro-main-ineq.1} when $F$ is of the form $F = I_2(f) + I_4(g),\, f\in \mathcal{H}^{\odot 2}, g\in \mathcal{H}^{\odot 4}$, which was proved by \cite[Theorem 8]{DEAV}. This particular case was used  to estimate the speed of convergence in the central limit theorem (CLT)  of the quadratic variation for AR(1) processes driven by second-chaos white noise. Moreover, according to \cite[Remark 12]{DEAV}, it turns out that the   bound in \eqref{Intro-main-ineq.1} is sharper than the bound in \eqref{estimate-EV}. For this reason, when we apply  the estimate \eqref{Intro-main-ineq.1}, the term $\sqrt{\langle
f_{p} \otimes f_{p},f_{q} \underset{q - p}{{\otimes
}} f_{q}\rangle _{{\mathcal{\mathcal{H}}}%
^{\otimes 2p}}}$  cannot merely be bounded above via
Cauchy-Schwarz inequality. Thus,  the improved bound in \eqref{Intro-main-ineq.1} is easily applicable in concrete situations,
 and   gives rise to sharp bounds in a variety of applications.

Now let us discuss an application to   Breuer–Major theorem. Consider a centered stationary Gaussian sequence $\left\{
Z_{k}, k\geq0\right\}$ with covariance $\rho(k):=\mathbb{E}(Z_{0}Z_{k})$ for all $k\geq0$ such that $\rho(0)>0$,  and $|\rho(k)|\leq1$ for large~$k$. We put
$\rho(k)=\rho(-k)$ for $k<0$. Fix integers  $d,m \geq   1$, and define  
\begin{align}F_n=\frac{1}{\sqrt{n}}\sum_{i=0}^{n-1}g(Z_i),\label{decomp2-F-intro}\end{align} where the function $g$ can be expressed in the
sub-basis of even-rank Hermite polynomials as follows:
\[g(x)=\sum_{k=d}^{m} \lambda_{g, 2 k} H_{2 k}\left({x}/{\sqrt{\rho(0)}}\right),\]
for some  $\lambda_{g, 2 k} \in \mathbb{R}, k=d, \ldots, m$. Then,  we can estimate the rate of convergence
in terms of the covariance function $\rho$ for the total variation distance
between ${F_n}/{\sqrt{\mathbb{E}[F_n^2]}}$ 
and the standard normal law $\mathcal{N}(0,1)$, see the forthcoming Corollary \ref{corol-stat}. In this context, if $F_n$ is a  sequence of the form \eqref{decomp2-F-intro} with is a general function, then, under different regularity assumptions on $g$,  several explicit upper bounds for $d_{T V}\left({F_n}/{\sqrt{E\left[F_n^2\right]}}, \mathcal{N}(0,1)\right)$ have been   recently obtained by \cite{KN}, \cite{NNP} and \cite{NZ}.

Finally, we apply the forthcoming Theorem \ref{CLT-finite-sum}, especially the forthcoming Corollary \ref{1st-2nd-chaos}, to derive new explicit bounds on the Kolmogorov   distance between a random variable $Q_T$ of the form \[Q_T= \frac{I_2(g_T)+I_1(f_T)+a_T}{ I_2(h_T)+b_T}, \quad g_T,h_T\in {\mathcal{H}}^{\odot2}, f_T\in {\mathcal{H}}, a_T,b_T\in \mathbb{R},\] and a
standard Gaussian random variable, see Theorem \ref{main-thm} below. This result, when $Q_T$ has the form \[Q_T= \frac{I_2(g_T)+a_T}{ I_2(h_T)+b_T}, \quad g_T,h_T\in {\mathcal{H}}^{\odot2},  a_T,b_T\in \mathbb{R},\] was proved by \cite{EA2023}, and was used (see \cite{AAE} and \cite[Section 4]{EA2023}) to   provide   Berry–Esseen bounds in
 Kolmogorov distance for   maximum
 likelihood drift estimation for linear stochastic   differential equations.
 
 The rest of this paper is structured as follows. In Section 2, we present some basic elements  of Gaussian
 analysis and Malliavin calculus used in this paper. Our main result, Theorem \ref{CLT-finite-sum}, is presented in Section \ref{sec:partial-sum}, as well as an application to    Breuer–Major theorem. Finally, in
 Section  \ref{sect:ratio}, we consider an application  to a ratio of  multiple  stochastic   integrals.

\section{\textbf{Elements of Malliavin calculus on Wiener space}}\label{sec:review}

 In this section,  we briefly recall some facts on Malliavin calculus and
 Stein’s method that are needed in the proofs of this paper. The interested reader can find more details in \cite{NP-book} and \cite{nualart-book}.

Let ${\mathcal{H}}$ be a real separable Hilbert space with inner product $\langle\cdot, \cdot\rangle_{{\mathcal{H}}}$ and norm $\|\cdot\|_{{\mathcal{H}}}$; in order to simplify our
 discussion, we will assume for the rest of the paper that ${\mathcal{H}}=L^2(A)$  with a Polish space $(A, \mathcal{A})$ and a non-atomic $\sigma$-finite measure $\mu$. We  denote for integers  $q \geq 1$  by ${\mathcal{H}}^{\otimes q}$ the $q$th tensor power and by ${\mathcal{H}}^{\odot q}$ the $q$th symmetric tensor power of ${\mathcal{H}}$. In this case  we have that ${\mathcal{H}}^{\otimes q}=$ $L^2\left(A^q, \mathcal{A}^{\otimes q}, \mu^{\otimes q}\right)$ and that ${\mathcal{H}}^{\odot q}$ coincides with the space $L_{\text {sym }}^2\left(A^q\right)$ of $\mu^{\otimes q}$-almost everywhere symmetric functions on $A^q$. Moreover,  for integers $p, q \geq 1, f \in {\mathcal{H}}^{\odot p}, g \in {\mathcal{H}}^{\odot q}$ and $r \in\{1, \ldots, \min \{p, q\}\}$,  the $r$th contraction of $f$ and $g$,  denoted by $f \otimes_r g$, is defined as
$$
\begin{array}{rl}
\left(f \otimes_r g\right)\left(y_1, \ldots, y_{p+q-2 r}\right):= & \int_{A^r}f\left(x_1, \ldots, x_r, y_1, \ldots, y_{p-r}\right) \\
& \quad \times g\left(x_1, \ldots, x_r, y_{p-r+1}, \ldots, y_{p+q-2 r}\right) \mu^{\otimes r}\left(\mathrm{~d}\left(x_1, \ldots, x_r\right)\right).
\end{array}
$$
Moreover, $f \otimes_0 g=f \otimes g$ equals the tensor product of $f$ and $g$ while $f \otimes_q g=\langle f, g\rangle_{{\mathcal{H}}^{\otimes q}}$ if $p=q$. Note that, in general, the contraction $f \otimes_r g$ is not a symmetric element of ${\mathcal{H}}^{\otimes(p+q-2 r)}$.  
\\
 We now let $W=\{W(h): h \in {\mathcal{H}}\}$ denote an isonormal Gaussian process over ${\mathcal{H}}$, which  is a family consisting 
of centred Gaussian random variables defined on a probability space $(\Omega, \mathcal{F}, \mathbb{P})$ and indexed by the elements of ${\mathcal{H}}$,   with covariance  
$$
 \mathbb{E}\left[W(h) W\left(h^{\prime}\right)\right]=\left\langle h, h^{\prime}\right\rangle_{{\mathcal{H}}}, \quad h, h^{\prime} \in {\mathcal{H}} .
$$
Without loss of generality we can assume that the $\sigma$-field $\mathcal{F}$ is generated by $W$. Let  $L^2(\Omega)$ denote  the space of square-integrable functions over $\Omega$. The $q$th Wiener chaos $\mathcal{H}_q$  of order $q$ associated with $W$ is the closed linear subspace of  $L^2(\Omega)$ generated by random variables of the form $H_q(W(h))$, where $H_q$ is the $q$th Hermite polynomial and $h \in {\mathcal{H}}$ satisfies $\|h\|_{{\mathcal{H}}}=1$. For $q = 0$ put $\mathcal{H}_0:=\mathbb{R}$. The $q$-th multiple integral of $h^{\otimes q} \in {\mathcal{H}}^{\odot q}$ is defined by the identity $I_q\left(h^{\otimes q}\right)=H_q(W(h))$ for any $h\in {\mathcal{H}}$ with $\|h\|_{{\mathcal{H}}}=1$. The map $I_q$ provides a linear isometry between ${\mathcal{H}}^{\odot q}$ (equipped with the norm $\sqrt{q!}\|\cdot\|_{{\mathcal{H}}^{\otimes q}}$ ) and $\mathcal{H}_q$ (equipped with $L^2(\Omega)$ norm). By convention, $I_0(c)=c$ for all $c\in\mathbb{R}$. We put $I_q(h):=I_q(\tilde{h})$ for general $h \in {\mathcal{H}}^{\otimes q}$ where $\tilde{h} \in {\mathcal{H}}^{\odot q}$ is the canonical symmetrization of $h$.\\
The main motivation for introducing the multiple integrals comes from the following properties:
\begin{itemize}
\item \textit{Wiener chaos expansion of $F\in L^2\left(\Omega\right)$~\cite[Theorem 2.2.4]{NP-book}}. $L^2\left(\Omega\right)$ can be decomposed into an infinite orthogonal sum of Wiener chaoses ${\mathcal{H}}_q$, $q \geq0$. In other words, any $F\in L^2\left(\Omega\right)$ can be represented as
    $$
F=\sum_{q=0}^{\infty} I_q\left(h_q\right)
$$
with $h_0=\mathbb{E}[F]$ and uniquely determined elements $h_q \in {\mathcal{H}}^{\odot q}, q \geq 1$.
\item \textit{Isometry property of integrals~\cite[Proposition 2.7.5]{NP-book}}. Fix integers $p, q \geq 1$ as well as $f \in {\mathcal{H}}^{\odot p}$ and $g \in {\mathcal{H}}^{\odot q}$.
\begin{align*}
  \mathbb{E} [ I_p(f) I_q(g) ] = \left\{ \begin{array}{ll} p! \inner{f}{g}_{{\mathcal{H}}^{\otimes p}} & \mbox{ if } p = q \\ 0 & \mbox{otherwise.} \end{array} \right.
\end{align*}
\item \textit{Product formula~\cite[Proposition 2.7.10]{NP-book}}. Fix integers $p,q \geq 1$. If $g \in {\mathcal{H}}^{\odot p}$ and $h \in {\mathcal{H}}^{\odot q}$, then
\begin{align*}
  I_p(g) I_q(h) = \sum_{r = 0}^{p \wedge q} r! {p \choose r} {q \choose r} I_{p + q -2r}(g \widetilde{\otimes}_r h).
\end{align*}
\end{itemize}
For a smooth and cylindrical random variable $F=f\left(W\left(\varphi_1\right), \ldots, W\left(\varphi_n\right)\right)$, with $\varphi_j \in {\mathcal{H}}$ and $f \in C_b^{\infty}\left(\mathbb{R}^n\right)$ (meaning that $f$ and its partial derivatives are bounded), the Malliavin derivative of $F$ is defined as the ${\mathcal{H}}$-valued random variable:
$$
D F=\sum_{j=1}^n \frac{\partial f}{\partial x_j}\left(W\left(\varphi_1\right), \ldots, W\left(\varphi_n\right)\right) \varphi_j .
$$
By iteration, we can also define the $k$th derivative $D^k F$, which is an element in the space $L^2\left(\Omega ; {\mathcal{H}}^{\otimes k}\right)$. For any real $p \geq 1$ and any integer $k \geq 1$, the Sobolev space $\mathbb{D}^{k, p}$ is defined as the closure of the space of smooth and cylindrical random variables with respect to the norm $\|\cdot\|_{k, p}$ defined by

$$
\|F\|_{k, p}^p=\mathbb{E}\left(|F|^p\right)+\sum_{j=1}^k \mathbb{E}\left(\left\|D^j F\right\|_{{\mathcal{H}}^{\otimes j}}^p\right) .
$$
 Let $Z$ be a real-valued random variable such that $\mathbb{E}[Z^4] <\infty$. The third and fourth
cumulants of $Z$ are, respectively, defined by
$$
\begin{aligned}
&\kappa_{3}(Z)=\mathbb{E}\left(Z^{3}\right)-3 \mathbb{E}\left(Z^{2}\right) \mathbb{E}(Z)+2 \left[\mathbb{E}(Z)\right]^{3}, \\
&\kappa_{4}(Z)=\mathbb{E}\left(Z^{4}\right)-4 \mathbb{E}(Z) \mathbb{E}\left(Z^{3}\right)-3
\left[\mathbb{E}\left(Z^{2}\right)\right]^{2}+12 \left[\mathbb{E}(Z)\right]^{2} \mathbb{E}\left(Z^{2}\right)-6 \left[\mathbb{E}(Z)\right]^{4}.
\end{aligned}
$$
Moreover, if $\mathbb{E}(Z)=0$, we obtain  
\[\kappa_{3}(Z)=\mathbb{E}\left(Z^{3}\right)\ \mbox{ and }\ \kappa_{4}(Z)=\mathbb{E}\left(Z^{4}\right)-3
\left[\mathbb{E}\left(Z^{2}\right)\right]^{2}.\]
Recall that, given two real-valued random variables $X,Y$, the total variation distance between the law of $X$ and the law of $Y$
is defined as
$$
d_{TV}(X, Y):=\sup_{B}\left|\mathbb{P}\left(X\in B\right)-\mathbb{P}\left(Y\in B\right)\right|,
$$
where  the supremum runs over the class of all Borel subsets $B$ of $\mathbb{R}$, and the
Kolmogorov distance  between the law of $X$ and the law of $Y$  is defined  as
\begin{equation*}
d_{Kol}\left( X,Y\right):= \sup_{z\in \mathbb{R}}\left\vert
\mathbb{P}\left(X\leq z\right)-\mathbb{P}\left( Y\leq
z\right)\right\vert.
\end{equation*}
Note that \begin{equation}d_{Kol}\left( X,Y\right)\leq d_{TV}\left( X,Y\right).\label{dKol:dTV}\end{equation}

The following notations will be used throughout this paper. We use $\mathcal{N} (0, 1)$  to denote a standard
normal random variable, while $\mathcal{N} (\mu, \sigma^2)$ denotes
a normal  variable with mean $\mu$ and variance $\sigma^2$. Also,  $C$ denotes a generic positive constant, which may change
from line to line.

\section{\textbf{A quantitative CLT for partial sums of multiple  stochastic  integrals}}\label{sec:partial-sum}
 The aim of the present section  is to measure the  total variation distance between a standardized partial sum of random variables belonging to a finite sum of Wiener chaoses and a standard normal random variable.

\begin{theorem}\label{CLT-finite-sum} Let $d\geq1, \ N\geq2$ be fixed integers, and let $F=\sum_{p=d}^NI_p(f_{p})$, with $f_{p}\in {\mathcal{H}}^{\odot
p}, \ p=1,\ldots,N$. Then there exists a constant $C>0$ depending only
on $N$  such that
\begin{eqnarray}&&d_{TV}\left( \frac{F}{\sqrt{E\left[F^{2}\right] }}%
, \mathcal{N}(0,1)\right) \nonumber\\&&\leqslant \frac{C}{E\left[F^{2}\right]}
\left(\max_{\underset{1\leq r\leq p-1}{d\vee2\leq p\leq N}}\Vert
f_{p}\underset{r}{{\otimes }}f_{p}\Vert_{{\mathcal{\mathcal{H}}}%
^{\otimes {2p-2r}}}+\delta_{dN}\max_{d\leq p< q\leq N}\sqrt{\langle
f_{p} \otimes f_{p},f_{q} \underset{q - p}{{\otimes
}} f_{q}\rangle _{{\mathcal{\mathcal{H}}}%
^{\otimes 2p}}}\right),\label{main-ineq.1}
\end{eqnarray}
where $\delta_{dN}=0$ if $d=N$, and $\delta_{dN}=1$ if $d\neq N$.
\end{theorem}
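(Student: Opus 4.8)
The plan is to use the Malliavin--Stein method. Since $F$ belongs to the finite sum of chaoses $\bigoplus_{p=d}^{N}\mathcal{H}_{p}$, it lies in $\mathbb{D}^{1,2}$ (in fact in $\mathbb{D}^{\infty}$) and is centered, so writing $\sigma^{2}=E[F^{2}]$ and $T=\inner{DF}{-DL^{-1}F}_{\mathcal{H}}$, the standard total-variation bound yields
\[
d_{TV}\!\left(\frac{F}{\sigma},\mathcal{N}(0,1)\right)\leq \frac{2}{\sigma^{2}}\sqrt{\mathrm{Var}(T)}.
\]
By the integration-by-parts formula $E[FG]=E[\inner{DG}{-DL^{-1}F}_{\mathcal{H}}]$ with $G=F$ one gets $E[T]=E[F^{2}]=\sigma^{2}$, so the whole problem reduces to estimating $\mathrm{Var}(T)=\|T-\sigma^{2}\|_{L^{2}(\Omega)}^{2}$.

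First I would compute the Wiener chaos decomposition of $T$. Using $D_{t}F=\sum_{p=d}^{N}p\,I_{p-1}(f_{p}(\cdot,t))$ and $-D_{t}L^{-1}F=\sum_{q=d}^{N}I_{q-1}(f_{q}(\cdot,t))$, the product formula applied pointwise in $t$, followed by the identity $\int_{A}f_{p}(\cdot,t)\otimes_{r}f_{q}(\cdot,t)\,\mu(dt)=f_{p}\otimes_{r+1}f_{q}$ and the reindexing $s=r+1$, gives
\[
T=\sum_{p,q=d}^{N}\;\sum_{s=1}^{p\wedge q}p\,(s-1)!\binom{p-1}{s-1}\binom{q-1}{s-1}\,I_{p+q-2s}\!\left(f_{p}\otimes_{s}f_{q}\right).
\]
The terms with $p+q-2s=0$ (that is, $p=q$ and $s=p$) recover the constant $E[T]=\sigma^{2}$, so $T-\sigma^{2}$ is exactly the sum of the terms with $p+q-2s\geq 1$.

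Next, by orthogonality of chaoses of distinct orders, the triangle inequality in $L^{2}(\Omega)$, the isometry property, and the fact that symmetrization does not increase the norm, I would reduce $\sqrt{\mathrm{Var}(T)}$ to a constant $C(N)$ (absorbing the finitely many binomial factors and the $\sqrt{(p+q-2s)!}$) times $\max\|f_{p}\otimes_{s}f_{q}\|_{\mathcal{H}^{\otimes(p+q-2s)}}$ over all $d\leq p,q\leq N$, $1\leq s\leq p\wedge q$ with $p+q-2s\geq 1$. To match these to the right-hand side of \eqref{main-ineq.1}, I would invoke the identity $\|f\otimes_{s}g\|^{2}=\inner{f\otimes_{p-s}f}{g\otimes_{q-s}g}$ valid for $f\in\mathcal{H}^{\odot p},\,g\in\mathcal{H}^{\odot q}$. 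The diagonal terms $p=q$, $1\leq s\leq p-1$ give the first-type norms $\|f_{p}\otimes_{s}f_{p}\|$ directly (here necessarily $p\geq d\vee 2$); the off-diagonal terms $p<q$, $1\leq s\leq p-1$ satisfy $\|f_{p}\otimes_{s}f_{q}\|^{2}=\inner{f_{p}\otimes_{p-s}f_{p}}{f_{q}\otimes_{q-s}f_{q}}$ with both $p-s$ and $q-s$ genuine intermediate indices, so Cauchy--Schwarz bounds them by the first-type maximum; and the off-diagonal terms $p<q$, $s=p$ give $\|f_{p}\otimes_{p}f_{q}\|^{2}=\inner{f_{p}\otimes f_{p}}{f_{q}\otimes_{q-p}f_{q}}$, which I keep intact to produce the second-type term. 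When $d=N$ there is a single chaos, no off-diagonal terms arise, and $\delta_{dN}=0$.

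The main obstacle I anticipate is not any single estimate but the bookkeeping in this last step: one must resist applying Cauchy--Schwarz to the $s=p$ cross terms (which would only reproduce the coarser bound \eqref{estimate-EV}, via $\inner{f_{p}\otimes f_{p}}{f_{q}\otimes_{q-p}f_{q}}\leq\|f_{p}\|^{2}\|f_{q}\otimes_{q-p}f_{q}\|$) and instead retain them as the inner products $\inner{f_{p}\otimes f_{p}}{f_{q}\otimes_{q-p}f_{q}}$, while simultaneously verifying through the identity above that every other contraction $\|f_{p}\otimes_{s}f_{q}\|$ genuinely reduces to the intermediate self-contraction norms allowed in the first family. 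Collecting the three cases and inserting the resulting bound on $\sqrt{\mathrm{Var}(T)}$ into the Malliavin--Stein inequality yields \eqref{main-ineq.1}.
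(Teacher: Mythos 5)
Your proposal is correct and follows essentially the same route as the paper: the Malliavin--Stein bound through $\langle DF,-DL^{-1}F\rangle_{\mathcal{H}}$, reduction to the contraction norms $\Vert f_p\otimes_s f_q\Vert$, the identity $\Vert f_p\otimes_s f_q\Vert^2=\langle f_p\otimes_{p-s}f_p,\,f_q\otimes_{q-s}f_q\rangle$ with Cauchy--Schwarz applied only when $s<p\wedge q$, and the crucial retention of the $s=p\wedge q$ inner product that yields the second term of \eqref{main-ineq.1}. The only difference is organizational: you expand $T=\langle DF,-DL^{-1}F\rangle_{\mathcal{H}}$ into chaoses in one stroke and estimate $\mathrm{Var}(T)$ by isometry, whereas the paper splits $E\vert E[F^2]-T\vert$ into the diagonal variances $\mathrm{Var}\bigl(\tfrac1p\Vert DI_p(f_p)\Vert^2_{\mathcal{H}}\bigr)$ and the cross second moments, importing the same contraction bounds from \cite[Lemma 3.1]{NPP}.
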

\begin{proof}
Fix   integers $d\geq 1$ and $N\geq2$, and let
\[F=\sum_{p=d}^NI_p(f_{p}),\ \mbox{ with }\ f_{p}\in {\mathcal{H}}^{\odot
p}, \ p=1,\ldots,N.\]
 It is shown in \cite[Proposition 2.4]{NP2013} that
\begin{equation}
d_{TV}\left( \frac{F}{\sqrt{E\left[F^{2}\right] }}%
,\mathcal{N}(0,1)\right) \leqslant \frac{2}{E\left[F^{2}\right]}
E\left\vert E\left[F^{2}\right]-\left\langle
DF,-DL^{-1}F\right\rangle _{\mathcal{H}}\right\vert.\label{ineq1}
\end{equation}%
On the other hand, in view of  the fact that
\begin{equation*}
E\left[ E\left( \left( I_{p}(f_{p})\right) ^{2}\right) -\langle
DI_{p}(f_{p}),-DL^{-1}I_{p}(f_{p})\rangle _{\mathcal{H}}\right] =0,
\quad \mbox{and }\ E\left[I_p(f_p)I_q(f_q)\right]=0\ \mbox{ if }
p\neq q,
\end{equation*}%
we obtain
\begin{eqnarray*}
&&E\left\vert E\left[F^{2}\right]-\left\langle
DF,-DL^{-1}F\right\rangle _{\mathcal{H}}\right\vert \\&\leqslant&
\sum_{p=d\vee2}^{N}E\left| E\left( \left( I_{p}(f_{p})\right)
^{2}\right)
-\frac{1}{p}\|DI_{p}(f_{p})\|^2_{\mathcal{H}}\right|+\sum_{d\leqslant
p\neq q\leqslant N}\frac{1}{q}E\left\vert \langle
DI_{p}(f_{p}),DI_{q}(f_{q})\rangle _{\mathcal{H}}\right\vert\\
&\leqslant&
\sum_{p=d\vee2}^{N}\sqrt{Var\left(\frac{1}{p}\Vert DI_{p}(f_{p})\Vert _{%
\mathcal{\mathcal{H}}}^{2}\right) }+\sum_{d\leqslant p\neq
q\leqslant N}\sqrt{E\left[\left(\frac{1}{q} \langle
DI_{p}(f_{p}),DI_{q}(f_{q})\rangle _{\mathcal{H}}\right)^2\right]}.
\end{eqnarray*}%
Moreover, thanks to \cite[Lemma 3.1]{NPP}, we have, for $p\geq2$,
\begin{eqnarray*}
Var\left( \frac{1}{p}\Vert DI_{p}(f_{p})\Vert _{\mathcal{\mathcal{H}}%
}^{2}\right) &=&\frac{1}{p^2}\sum_{r=1}^{p-1}r^{2}r!^{2}\binom{p}{r}^{4}(2p-2r)!\Vert f_{p}\widetilde{\underset{r}{\otimes}}f_{p}\Vert _{{%
\mathcal{\mathcal{H}}}^{\otimes 2p-2r}}^{2}\\
&\leq&\frac{1}{p^2}\sum_{r=1}^{p-1}r^{2}r!^{2}\binom{p}{r}^{4}(2p-2r)!\Vert f_{p} \underset{r}{\otimes}f_{p}\Vert _{{%
\mathcal{\mathcal{H}}}^{\otimes 2p-2r}}^{2},
\end{eqnarray*}%
and for $p,q\geq1$,  
\begin{eqnarray*}
&&E\left[ \left( \frac{1}{q}\langle
DI_{p}(f_{p}),DI_{q}(f_{q})\rangle _{\mathcal{H}}\right) ^{2}\right]
\leqslant p^{2} \sum_{r=1}^{p\wedge
q}(r-1)!^{2}\binom{p-1}{r-1}^{2}\binom{q-1}{r-1} ^{2}(p+q-2r)! \Vert
f_{p}\widetilde{\underset{r}{{\otimes }}}f_{q}\Vert
_{{\mathcal{\mathcal{H}}}^{\otimes p+q-2r}}^{2}.
\end{eqnarray*}
Further, for $1\leq r<p\wedge q$,
\begin{eqnarray*}
\Vert f_{p}\widetilde{\underset{r}{{\otimes }}}f_{q}\Vert
_{{\mathcal{\mathcal{H}}}^{\otimes p+q-2r}}^{2} &\leq& \Vert f_{p}
\underset{r}{{\otimes }}f_{q}\Vert
_{{\mathcal{\mathcal{H}}}^{\otimes p+q-2r}}^{2}\\
&=&\langle
f_{p}\underset{p-r}{{\otimes }}f_{p},f_{q}\underset{q-r}{{\otimes }}f_{q}\rangle _{{\mathcal{\mathcal{H}}}%
^{\otimes 2r}}\\
&\leq&  \Vert f_{p}\underset{p-r}{{\otimes }}f_{p}\Vert
_{{\mathcal{\mathcal{H}}}^{\otimes 2r}} \Vert
f_{q}\underset{q-r}{{\otimes }}f_{q}\Vert _{{\mathcal{\mathcal{H}}}%
^{\otimes 2r}}\\
&\leq& \frac12\left( \Vert f_{p}\underset{p-r}{{\otimes }}f_{p}\Vert
_{{\mathcal{\mathcal{H}}}^{\otimes 2r}}^{2}+\Vert
f_{q}\underset{q-r}{{\otimes }}f_{q}\Vert _{{\mathcal{\mathcal{H}}}%
^{\otimes 2r}}^{2}\right)\\
&=& \frac12\left( \Vert f_{p}\underset{r}{{\otimes }}f_{p}\Vert
_{{\mathcal{\mathcal{H}}}^{\otimes 2p-2r}}^{2}+\Vert
f_{q}\underset{r}{{\otimes }}f_{q}\Vert _{{\mathcal{\mathcal{H}}}%
^{\otimes 2q-2r}}^{2}\right),
\end{eqnarray*}%
where we used in the latter equation the fact that $r\geq1$, $p-r<p$
and $q-r<q$ (see \cite[page 121]{NP-book}).\\
For $r=p\wedge q$,
\begin{eqnarray*}
\Vert f_{p\wedge q}\widetilde{\underset{p\wedge q}{{\otimes
}}}f_{p\vee q}\Vert _{{\mathcal{\mathcal{H}}}^{\otimes p\vee
q-p\wedge q}}^{2} &\leq& \Vert f_{p\wedge q} \underset{p\wedge
q}{{\otimes }}f_{p\vee q}\Vert _{{\mathcal{\mathcal{H}}}^{\otimes
p\vee
q-p\wedge q}}^{2}\nonumber\\
&=&\langle f_{p\wedge q} \otimes f_{p\wedge q},f_{p\vee q}
\underset{p\vee q - p\wedge q}{{\otimes
}} f_{p\vee q}\rangle _{{\mathcal{\mathcal{H}}}%
^{\otimes 2(p\wedge q)}}\nonumber\\
&=&\langle f_{p\wedge q} \otimes f_{p\wedge q},f_{p\vee q}
\underset{|q - p|}{{\otimes
}} f_{p\vee q}\rangle _{{\mathcal{\mathcal{H}}}%
^{\otimes 2(p\wedge q)}}.\label{case r=inf(p,q)}
\end{eqnarray*}
The above observations show that 
\begin{eqnarray}
&&E\left\vert E\left[F^{2}\right]-\left\langle
DF,-DL^{-1}F\right\rangle _{\mathcal{H}}\right\vert \nonumber\\
&\leqslant&
\sum_{p=d\vee2}^{N}\frac{1}{p}\sqrt{\sum_{r=1}^{p-1}r^{2}r!^{2}\binom{p}{r}^{4}(2p-2r)!\Vert f_{p} \underset{r}{\otimes}f_{p}\Vert _{{%
\mathcal{\mathcal{H}}}^{\otimes 2p-2r}}^{2} }\nonumber\\&&+\sum_{d\leqslant
p\neq q\leqslant N}\frac{p}{\sqrt{2}}\sqrt{ \sum_{r=1}^{p\wedge
q-1}(r-1)!^{2}\binom{p-1}{r-1}^{2}\binom{q-1}{r-1} ^{2}(p+q-2r)!
\left( \Vert f_{p}\underset{r}{{\otimes }}f_{p}\Vert
_{{\mathcal{\mathcal{H}}}^{\otimes 2p-2r}}^{2}+\Vert
f_{q}\underset{r}{{\otimes }}f_{q}\Vert _{{\mathcal{\mathcal{H}}}%
^{\otimes 2q-2r}}^{2}\right)}\nonumber\\&&+\sum_{d\leqslant p\neq q\leqslant
N} p (p\wedge q-1)!\binom{p\vee q-1}{p\wedge q-1}\sqrt{|q-p|!
\langle f_{p\wedge q} \otimes f_{p\wedge q},f_{p\vee q} \underset{|q
- p|}{{\otimes
}} f_{p\vee q}\rangle _{{\mathcal{\mathcal{H}}}%
^{\otimes 2(p\wedge q)}}}.\label{ineq2}
\end{eqnarray}
Therefore, in view of \eqref{ineq1} and \eqref{ineq2},  the estimate \eqref{main-ineq.1} is obtained.
\end{proof}

\begin{corollary}\label{1st-2nd-chaos}
 Let $F:= I_1(f_{1})+I_2(f_{2})$, with $f_{1}\in {\mathcal{H}}, f_{2}\in {\mathcal{H}}^{\odot
2}$. Then there exists a constant $C$   such that
\begin{eqnarray*}d_{TV}\left( \frac{F}{\sqrt{E\left[F^{2}\right] }}%
, \mathcal{N}(0,1)\right) \leqslant \frac{C}{E\left[F^{2}\right]}\phi(F),
\end{eqnarray*}
where \begin{align}\phi(F):=\sqrt{\left|\kappa_4\left(I_2(f_{2})\right)\right|}+ \sqrt{\langle
f_{1} \otimes f_{1},f_{2} \underset{1}{{\otimes
}} f_{2}\rangle _{{\mathcal{\mathcal{H}}}^{\otimes 2}}}.\label{def-phi}\end{align}
\end{corollary}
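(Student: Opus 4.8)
The plan is to read off the Corollary as the smallest non-trivial instance of Theorem~\ref{CLT-finite-sum}, namely $d=1$ and $N=2$, and then to trade the surviving contraction norm for a fourth cumulant. First I would substitute $d=1$, $N=2$ into \eqref{main-ineq.1}. Since $d\neq N$ we have $\delta_{dN}=1$, so both terms on the right-hand side are present. In the first maximum the index $p$ ranges over $d\vee 2=2\leq p\leq 2$ and $r$ over $1\leq r\leq p-1$, so the only surviving pair is $p=2$, $r=1$, contributing $\Vert f_{2}\otimes_{1}f_{2}\Vert_{\mathcal{H}^{\otimes 2}}$ (here $2p-2r=2$). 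In the second maximum the pair $(p,q)$ runs over $1=d\leq p<q\leq 2=N$, so only $(p,q)=(1,2)$ occurs, with $q-p=1$ and $2p=2$, contributing $\sqrt{\langle f_{1}\otimes f_{1},\,f_{2}\otimes_{1}f_{2}\rangle_{\mathcal{H}^{\otimes 2}}}$. This yields
\begin{equation*}
d_{TV}\!\left(\frac{F}{\sqrt{E[F^{2}]}},\mathcal{N}(0,1)\right)\leq \frac{C}{E[F^{2}]}\left(\Vert f_{2}\otimes_{1}f_{2}\Vert_{\mathcal{H}^{\otimes 2}}+\sqrt{\langle f_{1}\otimes f_{1},\,f_{2}\otimes_{1}f_{2}\rangle_{\mathcal{H}^{\otimes 2}}}\right).
\end{equation*}

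Next I would convert the first term into a fourth cumulant. For a double integral the general chaos cumulant formula collapses, since for $q=2$ only the contraction index $r=1$ can appear and $f_{2}\widetilde{\otimes}_{1}f_{2}=f_{2}\otimes_{1}f_{2}$ (the $1$-contraction of a symmetric kernel is itself symmetric). The resulting identity is $\kappa_{4}(I_{2}(f_{2}))=48\,\Vert f_{2}\otimes_{1}f_{2}\Vert_{\mathcal{H}^{\otimes 2}}^{2}$, which one can check on the model case $f_{2}=h\otimes h$ with $\Vert h\Vert_{\mathcal{H}}=1$: there $I_{2}(f_{2})=W(h)^{2}-1$ has $\kappa_{4}=48$ while $\Vert f_{2}\otimes_{1}f_{2}\Vert^{2}=1$. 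In particular $\kappa_{4}(I_{2}(f_{2}))\geq 0$, so $\Vert f_{2}\otimes_{1}f_{2}\Vert_{\mathcal{H}^{\otimes 2}}=\tfrac{1}{\sqrt{48}}\sqrt{|\kappa_{4}(I_{2}(f_{2}))|}$.

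Substituting this relation into the displayed bound and absorbing the numerical factor $1/\sqrt{48}$ into the generic constant $C$ produces exactly $d_{TV}\!\left(F/\sqrt{E[F^{2}]},\mathcal{N}(0,1)\right)\leq \tfrac{C}{E[F^{2}]}\phi(F)$ with $\phi$ as in \eqref{def-phi}, which is the claim. The argument is essentially mechanical, and I do not expect a genuine obstacle: the only points demanding care are the bookkeeping of the two index ranges in \eqref{main-ineq.1} (verifying that precisely one pair survives in each maximum) and the exact proportionality constant in the cumulant identity; neither the constant nor the numerical factor matters beyond its nonnegativity, since both are absorbed into $C$.
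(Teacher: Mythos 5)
Your proof is correct and follows essentially the same route as the paper: specialize Theorem \ref{CLT-finite-sum} to $d=1$, $N=2$ (your index bookkeeping matches the paper's exactly), then trade $\Vert f_{2}\otimes_{1}f_{2}\Vert_{\mathcal{H}^{\otimes 2}}$ for $\sqrt{\left|\kappa_4\left(I_2(f_{2})\right)\right|}$ and absorb the numerical factor into $C$. The only cosmetic difference is that the paper invokes the two-sided bound $16\Vert g\otimes_1 g\Vert^2\leq\left|\kappa_4\left(I_2(g)\right)\right|\leq 48\Vert g\otimes_1 g\Vert^2$ quoted from \cite{BBNP}, whereas you use the exact identity $\kappa_4\left(I_2(f_{2})\right)=48\Vert f_{2}\otimes_{1}f_{2}\Vert_{\mathcal{H}^{\otimes 2}}^{2}$, which is a valid sharpening of the same fact since $f_{2}\otimes_{1}f_{2}$ is symmetric for symmetric $f_{2}$.
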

\begin{proof} Let us first recall that
if $g \in \mathcal{H}^{\otimes 2}$, then the  fourth cumulants for $I_2(g)$ satisfies the following (see  (6.6) in \cite{BBNP}):
\begin{align*}
\left|\kappa_4\left(I_2(g)\right)\right| & =16\left(\left\|g \otimes_1 g\right\|_{\mathcal{H}_{\otimes 2}}^2+2\left\|g \widetilde{\otimes_1} g\right\|_{\mathcal{H}^{\otimes 2}}^2\right) 
 \leq 48\left\|g \otimes_1 g\right\|_{\mathcal{H}_{\otimes 2}}^2 . 
\end{align*}
This implies
\begin{align}
16\left\|g \otimes_1 g\right\|_{\mathcal{H}_{\otimes 2}}^2 \leq \left|\kappa_4\left(I_2(g)\right)\right|  
 \leq 48\left\|g \otimes_1 g\right\|_{\mathcal{H}_{\otimes 2}}^2 .\label{4th-cum-2nd-chaos}
\end{align}
For  $d=1$ and $ N=2$, the estimate \eqref{main-ineq.1} yields
\begin{align*}d_{TV}\left( \frac{F}{\sqrt{E\left[F^{2}\right] }}%
, \mathcal{N}(0,1)\right) &\leqslant \frac{C}{E\left[F^{2}\right]}
\left(\Vert
f_{2}\underset{1}{{\otimes }}f_{2}\Vert_{{\mathcal{\mathcal{H}}}^{\otimes {2}}}+ \sqrt{\langle
f_{1} \otimes f_{1},f_{2} \underset{1}{{\otimes
}} f_{2}\rangle _{{\mathcal{\mathcal{H}}}^{\otimes 2}}}\right)\\
&\leqslant \frac{C}{E\left[F^{2}\right]}
\left(\sqrt{\left|\kappa_4\left(I_2(f_{2})\right)\right|}+ \sqrt{\langle
f_{1} \otimes f_{1},f_{2} \underset{1}{{\otimes
}} f_{2}\rangle _{{\mathcal{\mathcal{H}}}^{\otimes 2}}}\right),
\end{align*}
where the last inequality follows from \eqref{4th-cum-2nd-chaos}. Therefore, the desired result is obtained.

\end{proof}

\begin{corollary}\label{corol-stat}
Consider a centered stationary Gaussian sequence $\left\{
Z_{k}, k\geq0\right\}$ with covariance $\rho(k):=\mathbb{E}(Z_{0}Z_{k})$ for all $k\geq0$ such that $\rho(0)>0$,  and $|\rho(k)|\leq1$ for large~$k$. We put
$\rho(k)=\rho(-k)$ for $k<0$. Let  $F_n:=\frac{1}{\sqrt{n}}\sum_{i=0}^{n-1}g_m(Z_i)$ with $g_m(x):=\sum_{k=d}^{m} \lambda_{g_m, 2 k} H_{2 k}\left({x}/{\sqrt{\rho(0)}}\right)$, 
where $d,m \geq   1$, and  for every $k=d, \ldots, m, \lambda_{g_m, 2 k} \in \mathbb{R}$  and  $\lambda_{g_m, 2m}=\rho(0)^{m}$. Then there exists a positive constant $C$ depending only on $m$ such that
\begin{eqnarray}d_{TV}\left( \frac{F_n}{\sqrt{\mathbb{E}[F_n^2]}}%
, \mathcal{N}(0,1)\right) \leqslant \frac{C}{\mathbb{E}[F_n^2]}
\frac{1}{\sqrt{n}}\left[\left(\sum_{|k| <
n}|\rho(k)|^{\frac{4}{3}}\right)^{\frac32}
+\left(\sum_{k=0}^{n-1}\left|\rho(k)\right|^{2d}\right)\left(\sum_{k=0}^{n-1}\left|\rho(k)\right|^2\right)^{\frac12}\right].\label{main-ineq.2}
\end{eqnarray} 
As a consequence, fix an even integer $q \geqslant 2$, and define the power variation
\[
Q_{q, n}(Z):=\frac{1}{n} \sum_{i=0}^{n-1} Z_i^q,
\]
then  there exists a positive constant $C$ depending only on $q$ such that
\begin{eqnarray}&&d_{TV}\left( \frac{Q_{q, n}(Z)-\mathbb{E}Q_{q, n}(Z)}{\sqrt{Var\left(Q_{q, n}(Z)\right)}}%
, \mathcal{N}(0,1)\right)\nonumber\\&& \leqslant \frac{C}{Var\left(Q_{q, n}(Z)\right)}
\frac{1}{\sqrt{n}}\left[\left(\sum_{|k| <
n}|\rho(k)|^{\frac{4}{3}}\right)^{\frac32}
+\left(\sum_{k=0}^{n-1}\left|\rho(k)\right|^2\right)^{\frac32}\right].\label{x2-ineq.case}
\end{eqnarray}
\end{corollary}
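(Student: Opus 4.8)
The plan is to apply Theorem \ref{CLT-finite-sum} (or more precisely the general bound \eqref{main-ineq.1}) to the sequence $F_n$, and then translate the abstract contraction norms appearing on the right-hand side into explicit sums involving the covariance $\rho$. First I would set up the chaos representation: since $Z_i = W(e_i)$ for suitable unit vectors $e_i \in \mathcal{H}$ with $\langle e_i, e_j\rangle_{\mathcal{H}} = \rho(i-j)/\rho(0)$, the Hermite polynomial identity $H_{2k}(W(e_i)) = I_{2k}(e_i^{\otimes 2k})$ lets me write
\begin{align*}
F_n = \sum_{k=d}^{m} \lambda_{g_m,2k} \, \frac{1}{\sqrt{n}} \sum_{i=0}^{n-1} I_{2k}\bigl(e_i^{\otimes 2k}\bigr) = \sum_{k=d}^{m} I_{2k}(f_{n,2k}),
\end{align*}
where $f_{n,2k} := \lambda_{g_m,2k} n^{-1/2} \sum_{i=0}^{n-1} e_i^{\otimes 2k} \in \mathcal{H}^{\odot 2k}$. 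Thus $F_n$ is a finite sum of even-order chaoses with $N = 2m$ and lowest order $2d$, and Theorem \ref{CLT-finite-sum} applies directly.

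Next I would estimate the two families of terms on the right of \eqref{main-ineq.1}. For the diagonal contraction norms $\Vert f_{n,2k} \otimes_r f_{n,2k}\Vert$, expanding the tensor products produces a sum over pairs $(i,j)$ of inner products $\langle e_i, e_j\rangle^{\text{power}}$, i.e. of the form $\rho(i-j)^a/\rho(0)^a$ for appropriate exponents $a \geq 1$; the standard Breuer--Major bookkeeping (see the argument behind \cite{KN,NNP,NZ}) then bounds $\Vert f_{n,2k} \otimes_r f_{n,2k}\Vert^2$ by a constant times $n^{-1}\bigl(\sum_{|k|<n}|\rho(k)|^{4/3}\bigr)^{3}$, after using Young's convolution inequality to handle the double sum with the exponent $4/3$ emerging from the worst-case contraction. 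For the off-diagonal term $\langle f_{n,2p}\otimes f_{n,2p}, f_{n,2q}\otimes_{q-p} f_{n,2q}\rangle$ (with $p<q$), a similar expansion yields a product of a sum of $|\rho|^{2d}$-type terms (coming from the lowest available power, governed by $d$) and a sum of $|\rho|^2$-type terms, producing the second bracket $\bigl(\sum_{k=0}^{n-1}|\rho(k)|^{2d}\bigr)\bigl(\sum_{k=0}^{n-1}|\rho(k)|^2\bigr)^{1/2}$ after taking the square root. Assembling these and absorbing all combinatorial constants into $C = C(m)$ gives \eqref{main-ineq.2}.

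The consequence \eqref{x2-ineq.case} follows by specialization: for the pure power variation $Q_{q,n}(Z)$ with $q$ even, one has $d = 1$ and $m = q/2$ in the Hermite expansion of $x^q$, since the monomial $x^q$ expands into $H_2, H_4, \ldots, H_q$ with a nonzero lowest term $H_2$, so $2d = 2$. Setting $d=1$ in \eqref{main-ineq.2} collapses the second bracket to $\bigl(\sum_{k=0}^{n-1}|\rho(k)|^2\bigr)\bigl(\sum_{k=0}^{n-1}|\rho(k)|^2\bigr)^{1/2} = \bigl(\sum_{k=0}^{n-1}|\rho(k)|^2\bigr)^{3/2}$, and identifying $F_n$ (after centering and the normalization $\lambda_{g_m,2m}=\rho(0)^m$) with $\sqrt{n}\,(Q_{q,n}(Z) - \mathbb{E}Q_{q,n}(Z))$ up to the variance normalization yields \eqref{x2-ineq.case} directly from \eqref{main-ineq.2}.

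I expect the main obstacle to be the precise convolution estimates in the second paragraph, specifically justifying the emergence of the exponent $4/3$ in the diagonal term and correctly tracking which powers of $|\rho|$ appear in the off-diagonal term as functions of $d$. The exponent $4/3$ is characteristic of the optimal Breuer--Major bound obtained via Young's inequality $\|a * b\|_{\ell^1} \leq \|a\|_{\ell^p}\|b\|_{\ell^{p'}}$ applied to the self-convolution structure of $\rho$, and getting the three-halves power and the interplay between the two covariance sums exactly right (rather than with a cruder Cauchy--Schwarz bound) is where the careful combinatorial analysis of the contractions must be done.
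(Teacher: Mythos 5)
Your proposal is correct and follows essentially the same route as the paper's proof: the identical chaos decomposition $F_n=\sum_{k=d}^{m} I_{2k}(f_{2k})$ with $f_{2k}=\lambda_{g_m,2k}\,n^{-1/2}\sum_{i=0}^{n-1}\varepsilon_i^{\otimes 2k}$ feeding into Theorem \ref{CLT-finite-sum}, the same handling of the diagonal contractions (reduce all powers of $\rho$ to $1$ via $|\rho|\leq 1$, change variables, and apply the convolution bound \eqref{NZ-ineq} of \cite{NZ} with $M=3$, which is precisely the Young-type estimate producing your exponent $4/3$), the same direct expansion of the off-diagonal term using $p\geq d$ and $q-p\geq 1$ to get $\bigl(\sum_{k}|\rho(k)|^{2d}\bigr)\bigl(\sum_{k}|\rho(k)|^{2}\bigr)^{1/2}$, and the same specialization $d=1$, $m=q/2$ via the Hermite expansion of $x^q$ for the power variation. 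The only quibble is notational: since the chaos orders are $2p<2q$, the off-diagonal contraction index is $2q-2p$ rather than $q-p$.
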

\begin{proof}
The first step is to identify the centered stationary Gaussian sequence $\left\{
Z_{k}/\sqrt{\rho(0)}, k\geq0\right\}$  with an  isonormal Gaussian process  $\{ W(h), h \in \mathcal{H}\}$ for some Hilbert space $\mathcal{H}$.
  Suppose that $\mathcal{H}$ is a Hilbert space and let $\left\{\varepsilon_i, i \geq 0\right\}$ be a family of $\mathcal{H}$ such that $\left\langle \varepsilon_i, \varepsilon_j\right\rangle_{\mathcal{H}}=\rho(i-j)$ for each $i, j \geq 0$. In this situation, if $\{W(h): h \in \mathcal{H}\}$ is an isonormal Gaussian process, then the sequence $ \left\{Z_{k}/\sqrt{\rho(0)}, k \geq 0\right\}$ has the same law as $\left\{W\left(\varepsilon_k\right), k \geq 0\right\}$ and we can assume, without any loss of generality, that 
  \begin{equation}Z_k/\sqrt{\rho(0)}=W\left(\varepsilon_k\right)= I_1\left(\varepsilon_k\right),\quad  k \geq 0.\label{eq:W-Z}
  \end{equation}
   Hence  we can write, for every $i \geq  0$,
$$
g_m\left(Z_i\right)=\sum_{k=d}^{m} \lambda_{g_m, 2 k} H_{2 k}\left(\frac{Z_i}{\sqrt{\rho(0)}}\right)=\sum_{k=d}^{m} \lambda_{g_m, 2 k} I_{2 k}\left(\varepsilon_i^{\otimes 2 k}\right),
$$
which implies
\begin{equation}F_n=\sum_{k=d}^{m} I_{2 k}\left(\frac{\lambda_{g_m, 2 k}}{\sqrt{n}}\sum_{i=0}^{n-1}\varepsilon_i^{\otimes 2 k}\right)=:
 \sum_{k=d}^{m} I_{2 k}\left(f_{2k}\right),\label{decomp-F}
\end{equation}
where $f_{2k}=\frac{\lambda_{g_m, 2 k}}{\sqrt{n}}\sum_{i=0}^{n-1}\varepsilon_i^{\otimes 2 k}$.\\
 Furthermore, in view of \eqref{eq:W-Z},  we have, for every  $d\leq p<
q\leq m$,
\begin{eqnarray}
\left|\langle f_{2p} \otimes f_{2p},f_{2q} \underset{2q - 2p}{{\otimes
}} f_{2q}\rangle _{{\mathcal{\mathcal{H}}}%
^{\otimes
4p}}\right|&=&\frac{\lambda_{g_m, 2 p}^2\lambda_{g_m,2q}^2}{n^2}\left|\sum_{k_1,k_2,k_3,k_4=0}^{n-1}\rho(k_1-k_3)^{2p}\rho(k_2-k_4)^{2p}\rho(k_3-k_4)^{2q-2p}\right|\nonumber\\
&=&\frac{\lambda_{g_m, 2 p}^2\lambda_{g_m,2q}^2}{n}\left|\sum_{j_1,j_2,j_3=0}^{n-1}\rho(j_1)^{2p}\rho(j_2)^{2p}\rho(j_3)^{2q-2p}\right|\nonumber\\
&\leq&
\frac{C}{n}\left(\sum_{j=0}^{n-1}\left|\rho(j)\right|^{2d}\right)^2\left(\sum_{j=0}^{n-1}\left|\rho(j)\right|^2\right),\label{estimate1}
\end{eqnarray}
where we used  the change of variables $j_1=k_1-k_3$, $j_2=k_2-k_4$
and $j_3=k_3-k_4$, and $|\rho(k)|\leq1$. \\
According to \cite{NZ}, recall that  if   $M \geq 2 $ is an integer, then 
\begin{equation}
\sum_{\underset{1 \leq j \leq
M}{\left|k_{j}\right| \leq n}}|\rho(\mathbf{k} \cdot \mathbf{v})|
\prod_{j=1}^{M}\left|\rho\left(k_{j}\right)\right| \leq
C\left(\sum_{|k| \leq n}|\rho(k)|^{1+\frac{1}{M}}\right)^{M}, \label{NZ-ineq}
\end{equation}
where $\mathbf{k}=\left(k_{1}, \ldots, k_{M}\right)$ and $\mathbf{v}
\in \mathbb{R}^{M}$ is a fixed vector whose components are 1 or -1.\\
 Next,  suppose $1\leq r\leq 2p-1$ with $d\leq p\leq m$. Making the change of variables $k_{1}-k_{2}=j_{1},
k_{2}-k_{4}=j_{2}$ and $k_{3}-k_{4}=j_{3}$, and   using $|\rho(k)|\leq1$ and 
the  inequality \eqref{NZ-ineq}, we obtain 
\begin{eqnarray}
\Vert
f_{2p}\underset{r}{{\otimes }}f_{2p}\Vert_{{\mathcal{\mathcal{H}}}%
^{\otimes {2p-2r}}}^{2}  &\leqslant &
\frac{C}{n^{2}}\sum_{k_{1},k_{2},k_{3},k_{4}=0}^{n-1}
\rho(k_{1}-k_{2})^{r} \rho(k_{3}-k_{4})^{r}\rho(k_{1}-k_{3})^{2p-r} \rho(k_{2}-k_{4})^{2p-r} \nonumber\\
&\leqslant &\frac{C}{n^{2}}
\sum_{k_{1},k_{2},k_{3},k_{4}=0}^{n-1}\rho(k_{1}-k_{2})\rho(k_{3}-k_{4})\rho(k_{1}-k_{3})\rho(k_{2}-k_{4})
\nonumber\\
&=&\frac{C}{n}  \sum_{\underset{i=1,2,3}{\left|j_{i}\right| <
n}}\left|\rho\left(j_{1}\right) \rho\left(j_{2}\right)
\rho\left(j_{3}\right) \rho\left(j_{1}+j_{2}-j_{3}\right)\right|\nonumber\\
&\leq&\frac{C}{n}\left(\sum_{|k| <
n}|\rho(k)|^{\frac{4}{3}}\right)^{3}.\label{estimate2}
\end{eqnarray}%
Combining Theorem \ref{CLT-finite-sum}, \eqref{decomp-F}, \eqref{estimate1} and \eqref{estimate2} gives the desired estimate \eqref{main-ineq.2}.\\
Now we prove \eqref{x2-ineq.case}. Fix an even integer $q \geqslant 2$. Let $c_{q, 2 k}=\frac{1}{(2 k)!} \int_{-\infty}^{\infty} \frac{e^{-x^2 / 2}}{\sqrt{2 \pi}} x^q H_{2 k}(x) d x$ be the coefficients of the monomial $x^q$ expanded in the basis of Hermite polynomials:
$$
x^q=\sum_{k=0}^{q / 2} c_{q, 2 k} H_{2 k}(x).
$$
It is known  that the coefficients $c_{q, 2 k}$ can be expressed as: 
$$
c_{q, 2 k}=\frac{q!}{2^{\frac{q}{2}-k}\left(\frac{q}{2}-k\right)!(2 k)!}, \quad 0\leq k\leq q/2 .
$$
Note that $c_{q, q}=1$.
 Since \[\left(\frac{Z_i}{\sqrt{\rho(0)}}\right)^q=\sum_{k=0}^{q / 2} c_{q, 2 k} H_{2 k}\left(\frac{Z_i}{\sqrt{\rho(0)}}\right)\]
and for every $1\leq k\leq {q / 2}$, $\mathbb{E} H_{2 k}\left(\frac{Z_i}{\sqrt{\rho(0)}}\right)=0$, we have
$$
  Q_{q, n}(Z)-\mathbb{E}Q_{q, n}(Z)=\frac{\rho(0)^{\frac{q}{2}}}{n} \sum_{i=0}^{n-1} \sum_{k=1}^{q / 2} c_{q, 2 k} H_{2 k}\left(\frac{Z_i}{\sqrt{\rho(0)}}\right),
$$
which satisfies the conditions of Corollary \ref{corol-stat} with $d=1$. Thus, using \eqref{main-ineq.2},  the estimate \eqref{x2-ineq.case} follows.
\end{proof}

\begin{example}
 Let us apply Corollary \ref{corol-stat} to the particular case  
\[
Z_{k}^H:=B_{k+1}^{H}-B_{k}^{H}, \quad k\geq0,
\]
where $B^{H}$ is a fractional Brownian motion  of Hurst parameter $H\in(0,1)$. \\
Observe that $Z^H$ is a centered stationary Gaussian sequence with variance $\rho(0)=1$,
and  covariance given by
\[
\rho(k)=E\left[Z_{0} Z_{k}\right]=\frac{1}{2}\left(|k+1|^{2
H}+|k-1|^{2 H}-2|k|^{2 H}\right), \quad  k\geq0.
\]
Moreover,
\begin{equation}
\rho(k)=H(2 H-1)|k|^{2 H-2}+o\left(|k|^{2 H-2}\right), \quad \text {
as }|k| \rightarrow \infty. \label{cov-fbm}
\end{equation}
Fix an even integer $q\geq2$, and define 
$$
Q_{q, n}(Z^H):=\frac{1}{n} \sum_{i=0}^{n-1}\left(Z_{k}^{H}\right)^q=\frac{1}{n} \sum_{i=0}^{n-1}\left(B_{k+1}^{H}-B_{k}^{H}\right)^q.
$$
In view of  \eqref{x2-ineq.case} and \eqref{cov-fbm},   we can immediately deduce that
\begin{eqnarray}d_{TV}\left( \frac{Q_{q, n}(Z^H)-\mathbb{E}Q_{q, n}(Z^H)}{\sqrt{Var\left(Q_{q, n}(Z^H)\right)}}%
, \mathcal{N}(0,1)\right)\leq  C\times \left\{
\begin{aligned} n^{-\frac12},\quad \mbox{ if } 0<H<\frac85,\\ n^{-\frac12}\log^{3/2}(n),\quad \mbox{if
} H=\frac58,\\ n^{4H-3},\quad \mbox{if } \frac58\leq H<\frac34. \end{aligned}%
\right.
\end{eqnarray}
\end{example}

\section{\textbf{A quantitative bound in the Kolmogorov distance  for a random ratio}}\label{sect:ratio}
 In this section we provide an upper bound  for the
Kolmogorov  distance between  a ratio of multiple stochastic integrals and a
  Gaussian random variable. For these purposes we require some assumptions  on those multiple integrals.\\

\noindent \textbf{Assumption} $\mathbf{\left(\mathcal{A}_1\right)}$:
We assume that $\{G_T,T>0\}$ is a    stochastic process satisfying, for $T>0$,
\begin{eqnarray}
G_T>0 \mbox{ almost surely, and }\left|\frac{\mathbb{E}G_T}{\rho \sqrt{\lambda_T}}-1\right|\rightarrow0 \, \mbox{ and }\,
\mathbb{E}\left[(G_T-\mathbb{E}G_T)^2\right]\longrightarrow\sigma_1^2
\label{estimate-EG_T}
\end{eqnarray}
  as  $\lambda_T\rightarrow\infty$ for some positive constants $\rho>0,\ \sigma_1>0$, whereas $\lambda_T>0$ for all $T>0$, and 
\begin{eqnarray}&G_T-\mathbb{E}G_T=V_T+\frac{S_T}{\sqrt{\lambda_T}},\quad T>0,\label{relation-D-V-R}
\end{eqnarray}
with  \begin{align}V_T=I_2(g_T)\mbox{ and } S_T\in \mathcal{H}_1\oplus\mathcal{H}_2 \mbox{ such that  }  g_T\in {\mathcal{H}}^{\odot
2}\mbox{ and } \frac{\|S_T\|_{L^2(\Omega)}}{\sqrt{\lambda_T}}\rightarrow0,\label{condition-V-R}\end{align}
as $\lambda_T\rightarrow\infty$.\\
\noindent \textbf{Assumption} $\mathbf{\left(\mathcal{A}_2\right)}$: We suppose that      $\{(F_T, U_T,\mu_T),T>0\}$ is
  a stochastic process  that satisfies $F_T~=~I_1(f_T),$ $f_T\in {\mathcal{H}},\, U_T \in{\mathcal{H}}_{1}\oplus{\mathcal{H}}_{2},
\mbox{ and } \mu_T\in \mathbb{R}$ such that, for some positive constant  $\sigma_2>0$, 
\begin{eqnarray} \mathbb{E}\left[F_T^2\right]\longrightarrow\sigma_2^2,
\mbox{ and } \frac{\|U_T\|_{L^2(\Omega)}+|\mu_T|}{\sqrt{\lambda_T}}\rightarrow0,\mbox{ as } \lambda_T\rightarrow\infty.\label{condition-A-a}\end{eqnarray}

\begin{theorem}\label{main-thm} Let $\{G_T,T>0\}$  and $\{(F_T,U_T,\mu_T),T>0\}$
  satisfy the assumptions $\left(\mathcal{A}_1\right)$ and $\mathbf{\left(\mathcal{A}_2\right)}$, respectively. Then, there exists
a constant $C>0$ (independent of $T$) such that, for all $T>0$,
\begin{eqnarray*}
&&d_{Kol}\left(\frac{G_T-\mathbb{E}G_T+F_T+\frac{1}{\sqrt{\lambda_T}}\left(U_T+\mu_T\right)}{\frac{1}{\rho
\sqrt{\lambda_T}}G_{T}},   \mathcal{N}\left(0,\sigma^2\right)   \right)\nonumber\\&\leq&
C\left(\sqrt{\left|\kappa_4\left(V_T\right)\right|}+ \sqrt{\langle
f_T \otimes f_T,g_T \underset{1}{{\otimes
}} g_T\rangle _{{\mathcal{\mathcal{H}}}^{\otimes 2}}}\right)
+C{\lambda_T^{\frac14}}\left|\frac{\mathbb{E}G_T}{\rho
\sqrt{\lambda_T}}-1\right|+ C\left| \mathbb{E}[F_T^2] -
\sigma_2 ^2\right|\nonumber\\&&+ C\left| \mathbb{E}[( G_T-\mathbb{E} G_T)^2] -
\sigma_1 ^2\right|+\frac{C}{\sqrt{\lambda_T}}\left(\|S_T\|_{L^2(\Omega)}+\|U_T\|_{L^2(\Omega)}+|\mu_T|\right), 
\end{eqnarray*}
where the constants  $\rho$ and $\sigma^2:=\sigma_1^2+\sigma_2^2 $ are defined by
\eqref{estimate-EG_T} and \eqref{condition-A-a}, and $\{V_T,T>0\}$ and $\{S_T,T>0\}$ are  the multiple integrals  given
in \eqref{relation-D-V-R}.
\end{theorem}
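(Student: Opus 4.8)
The plan is to transfer the statement about the ratio $Q_T$ to a family of sub-level-set events indexed by the threshold $z$, and to apply Corollary \ref{1st-2nd-chaos} to each of them. Since $G_T>0$ almost surely, the denominator $D_T:=\frac{1}{\rho\sqrt{\lambda_T}}G_T$ is positive, so for every $z\in\mathbb{R}$ one has, up to a null set, $\{Q_T\le z\}=\{N_T\le zD_T\}$, where $N_T$ is the numerator. Setting $\epsilon_T:=\frac{\mathbb{E}G_T}{\rho\sqrt{\lambda_T}}-1$ and inserting the decompositions \eqref{relation-D-V-R}, \eqref{condition-V-R} and \eqref{condition-A-a}, a direct computation gives
\begin{equation*}
N_T-zD_T=\Big(I_1(f_T)+\big(1-\tfrac{z}{\rho\sqrt{\lambda_T}}\big)I_2(g_T)\Big)-z+\Big(\tfrac{1}{\sqrt{\lambda_T}}(S_T+U_T+\mu_T)-z\epsilon_T-\tfrac{z}{\rho\lambda_T}S_T\Big).
\end{equation*}
Thus, for each fixed $z$, the centred part of $N_T-zD_T$ lives in $\mathcal{H}_1\oplus\mathcal{H}_2$, with first-chaos kernel a small perturbation of $f_T$ and second-chaos kernel a small perturbation of $(1-\frac{z}{\rho\sqrt{\lambda_T}})g_T$; this is exactly the situation covered by Corollary \ref{1st-2nd-chaos}.

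The hard part, and the reason the factor $\lambda_T^{1/4}$ appears, is the multiplicative coupling between $z$ and the random denominator $G_T$: because $G_T$ carries the second chaos $V_T=I_2(g_T)$, the variable whose level sets must be controlled genuinely depends on $z$, so Corollary \ref{1st-2nd-chaos} has to be used uniformly over a range of $z$ and one must identify the optimal truncation level. I would truncate at $M:=\lambda_T^{1/4}$. For $|z|>M$ the Gaussian tail $\mathbb{P}(|\mathcal{N}(0,\sigma^2)|>M)\le e^{-cM^2}=e^{-c\sqrt{\lambda_T}}$ is negligible, and to bound $\mathbb{P}(Q_T\le z)$ one must prevent the ratio from exploding when $G_T$ is small: since $\mathbb{E}[(G_T-\mathbb{E}G_T)^2]\to\sigma_1^2$ while $\mathbb{E}G_T\sim\rho\sqrt{\lambda_T}\to\infty$, Chebyshev's inequality gives $\mathbb{P}(G_T\le\tfrac12\mathbb{E}G_T)=O(\lambda_T^{-1})$, and on the complementary event $D_T\ge\tfrac12$ forces $\{Q_T>z\}\subseteq\{N_T>z/2\}$ (symmetrically for $z<-M$), whose probability is super-polynomially small by the sub-exponential tails of the first-plus-second chaos variable $N_T$. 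Hence $|z|>M$ contributes negligibly.

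For $|z|\le M$ one has $\frac{z}{\rho\sqrt{\lambda_T}}=O(\lambda_T^{-1/4})$, so the second-chaos coefficient $1-\frac{z}{\rho\sqrt{\lambda_T}}$ stays in a fixed compact subinterval of $(0,\infty)$ and the variance of the main variable is bounded away from $0$. Applying Corollary \ref{1st-2nd-chaos} uniformly in $z$, the fourth-cumulant quantity of the second-chaos part equals $(1-\frac{z}{\rho\sqrt{\lambda_T}})^4|\kappa_4(V_T)|$ up to kernel perturbations, and the factor $(1-\frac{z}{\rho\sqrt{\lambda_T}})^4=1+O(\lambda_T^{-1/4})$ is absorbed into $C$, leaving $C\sqrt{|\kappa_4(V_T)|}$. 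The decisive point for obtaining a \emph{linear} (rather than cube-root) dependence on the remainders is that the cross term in Corollary \ref{1st-2nd-chaos} is an honest norm: a short computation shows
\begin{equation*}
\sqrt{\langle f\otimes f,\,g\otimes_1 g\rangle_{\mathcal{H}^{\otimes2}}}=\|f\otimes_1 g\|_{\mathcal{H}},
\end{equation*}
so by the triangle inequality, replacing $f_T,g_T$ by their $\frac{1}{\sqrt{\lambda_T}}$-perturbations coming from $S_T,U_T$ changes this term by at most $\frac{C}{\sqrt{\lambda_T}}(\|S_T\|_{L^2(\Omega)}+\|U_T\|_{L^2(\Omega)})$, thereby producing both $\sqrt{\langle f_T\otimes f_T,g_T\otimes_1 g_T\rangle_{\mathcal{H}^{\otimes2}}}$ and the corresponding remainder contribution.

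It remains to pass from $\mathcal{N}(0,\mathrm{Var}(N_T-zD_T))$ to $\mathcal{N}(0,\sigma^2)$ and to absorb the deterministic threshold $z(1+\epsilon_T)-\frac{\mu_T}{\sqrt{\lambda_T}}$. The comparison of centred Gaussians contributes $|\mathrm{Var}(N_T-zD_T)-\sigma^2|$; its $z$-independent part reduces, via $\mathbb{E}[V_T^2]=\mathbb{E}[(G_T-\mathbb{E}G_T)^2]-O(\lambda_T^{-1/2}\|S_T\|_{L^2(\Omega)})$ (Cauchy–Schwarz on $\mathbb{E}[V_TS_T]$) and $\mathbb{E}[F_T^2]\to\sigma_2^2$, to the two stated corrections $|\mathbb{E}[(G_T-\mathbb{E}G_T)^2]-\sigma_1^2|$ and $|\mathbb{E}[F_T^2]-\sigma_2^2|$, while its $z$-dependent part is of lower order once weighted by the Gaussian density (using $\sup_z|z|^ke^{-cz^2}<\infty$). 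The deterministic shift $z\epsilon_T$, bounded through the uniform density bound on $|z|\le M$, contributes at most $M|\epsilon_T|=\lambda_T^{1/4}|\epsilon_T|$ — the origin of the factor $\lambda_T^{1/4}$ — and the constant shift $\frac{\mu_T}{\sqrt{\lambda_T}}$ together with the random remainder $\frac{1}{\sqrt{\lambda_T}}(S_T+U_T+\mu_T)$ enter linearly, yielding $\frac{C}{\sqrt{\lambda_T}}(\|S_T\|_{L^2(\Omega)}+\|U_T\|_{L^2(\Omega)}+|\mu_T|)$. Summing the bounds over $|z|\le M$ and adding the negligible tails gives the asserted estimate.
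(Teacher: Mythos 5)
Your proposal reproduces the essential architecture of the paper's proof: the sub-level-set identity $\{Q_T\le z\}=\{N_T\le zD_T\}$ (the paper's $\{Q_T\le z\}=\{\Lambda_T(z)\le z\}$ with $K_T(z)$), the truncation at exactly $M=\lambda_T^{1/4}$, the uniform-in-$z$ application of Corollary \ref{1st-2nd-chaos} to the first-plus-second chaos variable with coefficient $1-\frac{z}{\rho\sqrt{\lambda_T}}$ kept in a compact subinterval of $(0,\infty)$, the uniform two-sided variance bound (the paper's $\eta_T\le\mathbb{E}[K_T^2(z)]\le\delta_T$ and \eqref{sup of M}), and the Gaussian-vs-Gaussian comparisons that generate the terms $\lambda_T^{1/4}|\epsilon_T|$, $|\mathbb{E}[(G_T-\mathbb{E}G_T)^2]-\sigma_1^2|$, $|\mathbb{E}[F_T^2]-\sigma_2^2|$ and the $\lambda_T^{-1/2}$ remainders — these are precisely the paper's $A(z,T)$, $B(z,T)$, $D(z,T)$. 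One point where you are actually more explicit than the paper: the identity $\sqrt{\langle f\otimes f,\,g\otimes_1 g\rangle_{\mathcal{H}^{\otimes 2}}}=\|f\otimes_1 g\|_{\mathcal{H}}$ is correct (expand the inner product to get $\int_A\bigl(\int_A g(t,x)f(x)\,\mu(dx)\bigr)^2\mu(dt)$), it shows in passing that the inner product under the square root is nonnegative, and it cleanly justifies the linear-in-remainder perturbation step that the paper asserts tersely in \eqref{4cumulant-F-A}.

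The one genuine divergence — and the one weak point — is the tail regime $|z|>\lambda_T^{1/4}$. The paper needs no structural information about $Q_T$ at all: it writes $\mathbb{P}\{Q_T>\lambda_T^{1/4}\}\le|\mathbb{P}\{Q_T\le\lambda_T^{1/4}\}-\mathbb{P}\{\mathcal{N}(0,\sigma^2)\le\lambda_T^{1/4}\}|+\mathbb{P}\{\mathcal{N}(0,\sigma^2)>\lambda_T^{1/4}\}$ and reuses the bound already proved at the truncation point, leaving only a Gaussian tail $Ce^{-\sqrt{\lambda_T}/(4\sigma^2)}$. Your route instead splits on the event $\{G_T\le\frac12\mathbb{E}G_T\}$ and bounds it by Chebyshev, which yields an additive error of order $\lambda_T^{-1}$. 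That term is \emph{not} among the quantities in the asserted bound, and it is not dominated by them in general: take $S_T=U_T=\mu_T=0$, $\mathbb{E}G_T=\rho\sqrt{\lambda_T}$ and the moment conditions met exactly; then the right-hand side is of order $\sqrt{|\kappa_4(V_T)|}+\|f_T\otimes_1 g_T\|_{\mathcal{H}}$, which can decay faster than $\lambda_T^{-1}$ (spread the spectrum of $g_T$), so "contributes negligibly" does not hold relative to the target bound. The repair is easy and worth naming: $G_T-\mathbb{E}G_T$ lives in $\mathcal{H}_1\oplus\mathcal{H}_2$ with bounded variance, so hypercontractivity gives $\mathbb{P}\{|G_T-\mathbb{E}G_T|\ge\frac12\mathbb{E}G_T\}\le Ce^{-c\sqrt{\lambda_T}}$, putting your leftover on the same (implicitly absorbed) footing as the paper's $Ce^{-\sqrt{\lambda_T}/(4\sigma^2)}$ — or simply adopt the paper's reflection trick, which also spares you the sub-exponential tail estimate on $N_T$. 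With that single substitution your argument is complete and matches the paper's.
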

\begin{proof}
  For $z \in \mathbb{R}, T>0$, we denote
\[\Lambda_T(z):=\frac{ \left(1-\frac{z}{\rho
\sqrt{\lambda_T}}\right)(G_T-\mathbb{E}G_T)+F_T+\frac{1}{\sqrt{\lambda_T}}\left(U_T+\mu_T\right)}{\frac{1}{\rho
\sqrt{\lambda_T}}\mathbb{E}G_T}.
\]
and 
\begin{eqnarray}K_T(z):=
\left(1-\frac{z}{\rho\sqrt{\lambda_T}}\right)(G_T-\mathbb{E}G_T)+F_T+\frac{1}{\sqrt{\lambda_T}}U_T. \label{exp-M}\end{eqnarray}
Hence 
\begin{eqnarray*}\Lambda_T(z)=\frac{K_T(z)+\frac{1}{\sqrt{\lambda_T}} \mu_T}{\frac{1}{\rho \sqrt{\lambda_T}}\mathbb{E}G_T},\quad z \in \mathbb{R},\
T>0.\label{exp-H}\end{eqnarray*}
 For $T>0$, we define
\begin{eqnarray*}Q_T:=\frac{ G_T-\mathbb{E}G_T +F_T+\frac{1}{\sqrt{\lambda_T}}\left(U_T+\mu_T\right)}{\frac{1}{\rho
\sqrt{\lambda_T}}G_{T}}.\end{eqnarray*}
Observe that, for every $z \in \mathbb{R}$,
$T>0$,
\begin{eqnarray}\left\{Q_T \leq z\right\}=\{\Lambda_T(z) \leq
z\}.\label{link-Q-H}\end{eqnarray}
Now suppose that $\lambda_T>\frac{1}{\rho ^4}$ and
$0<z\leq\lambda_T^{\frac14}$. In this way, we have
   \[1-\frac{z}{\rho \sqrt{\lambda_T}}>0,\quad 1-\frac{z}{\rho \sqrt{\lambda_T}}\geq 1-\frac{1}{\rho \lambda_T^{\frac14}}.\]
Combining this together with  \eqref{exp-M} and the orthogonality properties of multiple integrals, we obtain
\begin{eqnarray}\mathbb{E}\left[K_T^2(z)\right]&=& \left(1-\frac{z}{\rho \sqrt{\lambda_T}}\right)^2
\mathbb{E}\left[(G_T-\mathbb{E}G_T)^2\right]+\mathbb{E}\left[F_T^2\right]+\frac{1}{\lambda_T}\mathbb{E}\left[U_T^2\right]
+2\frac{\mathbb{E}\left[U_T F_T\right]}{\sqrt{\lambda_T}}\nonumber\\&&+2\left(1-\frac{z}{\rho \sqrt{\lambda_T}}\right)
\frac{\mathbb{E}\left[U_T(G_T-\mathbb{E}G_T)\right]}{\sqrt{\lambda_T}}
+2\left(1-\frac{z}{\rho \sqrt{\lambda_T}}\right)
\frac{\mathbb{E}\left[F_TR_T\right]}{\sqrt{\lambda_T}}
\label{decomp-K}\\
&\geq& \left(1-\frac{1}{\rho \lambda_T^{\frac14}}\right)^2
\mathbb{E}\left[(G_T-\mathbb{E}G_T)^2\right]+\mathbb{E}\left[F_T^2\right]+\frac{1}{\lambda_T}\mathbb{E}\left[U_T^2\right]
+2\frac{\mathbb{E}\left[U_T F_T\right]}{\sqrt{\lambda_T}}\nonumber\\&&-2 \left|1-\frac{1}{\rho
\lambda_T^{\frac14}}\right|
\frac{\left|\mathbb{E}\left[U_T(G_T-\mathbb{E}G_T)\right]\right|}{\sqrt{\lambda_T}}
-2 \left|1-\frac{1}{\rho
\lambda_T^{\frac14}}\right|
\frac{\left|\mathbb{E}\left[F_TR_T\right]\right|}{\sqrt{\lambda_T}}\nonumber\\
&=:&\eta_T.\label{estimate-K-1}
\end{eqnarray}
Furthermore, 
\begin{eqnarray}\mathbb{E}\left[K_T^2(z)\right]&\leq&  
\mathbb{E}\left[(G_T-\mathbb{E}G_T)^2\right]+\mathbb{E}\left[F_T^2\right]+\frac{1}{\lambda_T}\mathbb{E}\left[U_T^2\right]
+2\frac{\mathbb{E}\left[U_T F_T\right]}{\sqrt{\lambda_T}}\nonumber\\&&+2  
\frac{\left|\mathbb{E}\left[U_T(G_T-\mathbb{E}G_T)\right]\right|}{\sqrt{\lambda_T}}
+2 
\frac{\left|\mathbb{E}\left[F_TR_T\right]\right|}{\sqrt{\lambda_T}}
\nonumber\\
&=:&\delta_T. \label{estimate-K-2}
\end{eqnarray}
So, by \eqref{estimate-K-1}  and \eqref{estimate-K-2}, we get, for every  $\lambda_T>\frac{1}{\rho ^4}$,
\[\eta_T\leq \sup_{0<z\leq\lambda_T^{\frac14}}\mathbb{E}\left[K_T^2(z)\right]\leq \delta_T.\]
According to \eqref{estimate-EG_T} and
\eqref{condition-A-a}, $\eta_T\longrightarrow  \sigma^2$ and $\delta_T\longrightarrow  \sigma^2$ as  
$\lambda_T\rightarrow\infty$, so, there exist $c_1,c_2>0, \lambda_{T_0}>\frac{1}{\rho ^4}$ such that 
\begin{eqnarray}c_1\leq\sup_{\lambda_{T}>\lambda_{T_0},\ 0<z\leq\lambda_T^{\frac14}}\mathbb{E}\left[K_T^2(z)\right]\leq c_2. \label{sup of M}\end{eqnarray}
Next, in view of \eqref{link-Q-H}, we obtain
\begin{eqnarray}
 &&\left|\mathbb{P}\left\{Q_T \leq z\right\}-\mathbb{P}\{ \mathcal{N}(0,\sigma^2) 
\leq z\}\right|=\left|\mathbb{P}\left\{\Lambda_T(z) \leq
z\right\}-\mathbb{P}\{ \mathcal{N}(0,\sigma^2)  \leq
z\}\right|\nonumber\\&\leq&\left|\mathbb{P}\left\{\frac{K_T(z)}{\frac{1}{\sigma}\sqrt{\mathbb{E}\left[K_T^2(z)\right]}}
\leq \frac{z\frac{1}{\rho
\sqrt{\lambda_T}}\mathbb{E}G_T-\frac{1}{\sqrt{\lambda_T}}\mu_T}{\frac{1}{\sigma}\sqrt{\mathbb{E}\left[K_T^2(z)\right]}}\right\}
-\mathbb{P}\left\{ \mathcal{N}(0,\sigma^2)  \leq \frac{z\frac{1}{\rho
\sqrt{\lambda_T}}\mathbb{E}G_T-\frac{1}{\sqrt{\lambda_T}}\mu_T}{\frac{1}{\sigma}\sqrt{\mathbb{E}\left[K_T^2(z)\right]}}\right\}\right|\nonumber\\&&+
\left|\mathbb{P}\left\{ \mathcal{N}(0,\sigma^2) \leq \frac{z\frac{1}{\rho
\sqrt{\lambda_T}}\mathbb{E}G_T-\frac{1}{\sqrt{\lambda_T}}\mu_T}{\frac{1}{\sigma}\sqrt{\mathbb{E}\left[K_T^2(z)\right]}}\right\}
-\mathbb{P}\left\{ \mathcal{N}(0,\sigma^2)  \leq \frac{z\frac{1}{\rho
\sqrt{\lambda_T}}\mathbb{E}G_T}{\frac{1}{\sigma}\sqrt{\mathbb{E}\left[K_T^2(z)\right]}}\right\}\right|\nonumber\\&&+
\left|\mathbb{P}\left\{ \mathcal{N}(0,\sigma^2) \leq \frac{z\frac{1}{\rho
\sqrt{\lambda_T}}\mathbb{E}G_T}{\frac{1}{\sigma}\sqrt{\mathbb{E}\left[K_T^2(z)\right]}}\right\}
-\mathbb{P}\{ \mathcal{N}(0,\sigma^2)  \leq z\}\right|\nonumber\\
&=:& A(z,T)+B(z,T)+D(z,T).\label{triang.-kol}
\end{eqnarray}
Observe that 
\begin{align}A(z,T)\leq d_{{Kol}}\left(\frac{K_T(z)}{\frac{1}{\sigma}\sqrt{\mathbb{E}\left[K_T^2(z)\right]}}, \mathcal{N}(0,\sigma^2) \right).\label{estimate-A(z,T)}
\end{align}
By straightforward calculations, we have, for all $u,v\in \mathbb{R}$,
\begin{eqnarray}\left|\mathbb{P}\left\{ \mathcal{N}(0,\sigma^2)  \leq
u\right\}-\mathbb{P}\left\{ \mathcal{N}(0,\sigma^2)  \leq
v\right\}\right|\leq\frac{|u-v|}{\sqrt{2\pi}\sigma}e^{-\frac{\min(u^2,v^2)}{2\sigma^2}}, \label{diff-proba-estimate}\end{eqnarray}
and
\begin{eqnarray}\sup_{\{x>0\}}|x|e^{-x^2}<\infty,\qquad
 \sup_{\{x>0\}}x^2e^{- x^2}<\infty.\label{sup-z-exp}
\end{eqnarray}
Further, in view of  \eqref{condition-A-a} and \eqref{sup
of M},
\begin{eqnarray}
  \frac{1}{\sqrt{2\pi}\sigma}\left|\frac{\frac{1}{\sqrt{\lambda_T}}\mu_T}{\sqrt{\mathbb{E}\left[K_T^2(z)\right]}}\right|
\leq C\frac{|\mu_T|}{\sqrt{\lambda_T}}.\label{estimate-mu}
\end{eqnarray}
Hence, it follows from \eqref{diff-proba-estimate}, \eqref{sup-z-exp} and \eqref{estimate-mu} that
\begin{align}B(z,T)\leq  C\frac{|\mu_T|}{\sqrt{\lambda_T}}. \label{estimate-B(z,T)}
\end{align}
Moreover, by using \eqref{estimate-EG_T}, \eqref{condition-A-a}, \eqref{decomp-K}, \eqref{sup of M}, \eqref{diff-proba-estimate}, \eqref{sup-z-exp},  and  the orthogonality properties of multiple integrals,
\begin{align}D(z,T)&\leq\frac{|z|}{\sqrt{2\pi}\sigma}\left|\frac{\frac{\mathbb{E}G_T}{\rho
\sqrt{\lambda_T}}}{\frac{1}{\sigma}\sqrt{\mathbb{E}\left[K_T^2(z)\right]}}-1\right|
e^{-C z^2 \min\left(1,\frac{\frac{\mathbb{E}G_T}{\rho
\sqrt{\lambda_T}}}{\frac{1}{\sigma}\sqrt{\mathbb{E}\left[K_T^2(z)\right]}}\right)^2}\nonumber\\&\leq
\frac{|z|}{\sqrt{2\pi}\sigma}\left|\frac{\frac{\mathbb{E}G_T}{\rho
\sqrt{\lambda_T}}-1}{\frac{1}{\sigma}\sqrt{\mathbb{E}\left[K_T^2(z)\right]}}\right|
e^{-C z^2 \min\left(1,\frac{\frac{\mathbb{E}G_T}{\rho
\sqrt{\lambda_T}}}{\sqrt{\mathbb{E}\left[K_T^2(z)\right]}}\right)^2}
\nonumber\\&+
\frac{|z|}{\sqrt{2\pi}\sigma}\left|\frac{1}{\frac{1}{\sigma}\sqrt{\mathbb{E}\left[K_T^2(z)\right]}}-1\right|
e^{-C \left(\frac{z}{\sqrt{\mathbb{E}\left[K_T^2(z)\right]}}\right)^2
\min\left(\sqrt{\eta_T}, \frac{1}{\rho \sqrt{\lambda_T}}\mathbb{E}G_T
\right)^2}
\nonumber\\&\leq C  |z|\left|\frac{\mathbb{E}G_T}{\rho
\sqrt{\lambda_T}}-1\right| + C\frac{|z|}{\sqrt{\mathbb{E}\left[K_T^2(z)\right]}}\left| \mathbb{E}\left[K_T^2(z)\right] -\sigma^2\right|
e^{-C \left(\frac{z}{\sqrt{\mathbb{E}\left[K_T^2(z)\right]}}\right)^2}
\nonumber\\&\leq C\left[{\lambda_T^{\frac14}}\left|\frac{\mathbb{E}G_T}{\rho
\sqrt{\lambda_T}}-1\right| +  \left| \mathbb{E}[( G_T-\mathbb{E} G_T)^2] -
\sigma_1 ^2\right|+\left| \mathbb{E}[F_T^2] -
\sigma_2 ^2\right|+\frac{\|U_T\|_{L^2(\Omega)}+|\mu_T|}{\sqrt{\lambda_T}}\right].\label{estimate-D(z,T)}\end{align}
Consequently, in view of  \eqref{triang.-kol}, \eqref{estimate-A(z,T)}, \eqref{estimate-B(z,T)} and \eqref{estimate-D(z,T)}, we obtain, for
every $ \lambda_{T}~>~\lambda_{T_0},$  $0~<~z~\leq~\lambda_T^{\frac14}$,
\begin{eqnarray}
 &&\left|\mathbb{P}\left\{Q_T \leq
 z\right\}-\mathbb{P}\{ \mathcal{N}(0,\sigma^2)  \leq z\}\right| \nonumber\\
&\leq&
d_{{Kol}}\left(\frac{K_T(z)}{\frac{1}{\sigma}\sqrt{\mathbb{E}\left[K_T^2(z)\right]}}
, \mathcal{N}(0,\sigma^2) \right)+C{\lambda_T^{\frac14}}\left|\frac{1}{\rho
\sqrt{\lambda_T}}\mathbb{E}G_T-1\right|+ C\left| \mathbb{E}\left[( G_T-\mathbb{E} G_T)^2\right] -
\sigma_1^2\right|\nonumber\\&&+ C\left| \mathbb{E}\left[F_T^2\right] -
\sigma_2^2\right|+\frac{C}{\sqrt{\lambda_T}}\left(\|U_T\|_{L^2(\Omega)}+|\mu_T|\right).\label{{Kol}-F-z<}
\end{eqnarray}
 On the other
hand, using  \eqref{estimate-EG_T}--\eqref{condition-A-a}, \eqref{exp-M} and  the orthogonality properties of multiple integrals, we
obtain, for every $\lambda_T>\lambda_{T_0}$, $0<z\leq \lambda_T^{\frac14}$,
\begin{eqnarray}
\phi(K_T(z))
&\leq& \phi\left(\left(1-\frac{z}{\rho\sqrt{\lambda_T}}\right)(G_T-\mathbb{E}G_T)+F_T\right) 
+\frac{C}{\sqrt{\lambda_T}} \|U_T\|_{L^2(\Omega)} \nonumber\\
&\leq&\phi\left(G_T-\mathbb{E}G_T+F_T\right) 
+\frac{C}{\sqrt{\lambda_T}}\|U_T\|_{L^2(\Omega)}\nonumber\\
&\leq&\phi\left( V_T+F_T\right) +\frac{C}{\sqrt{\lambda_T}}\left(\|S_T\|_{L^2(\Omega)}+\|U_T\|_{L^2(\Omega)}\right),\label{4cumulant-F-A}
\end{eqnarray}
where $\phi(.)$ is defined by \eqref{def-phi}.
Applying  Corollary \ref{1st-2nd-chaos} together  with \eqref{dKol:dTV}, \eqref{sup of M} and \eqref{4cumulant-F-A}, we have, for
every $\lambda_T>\lambda_{T_0}$, $0<z\leq \lambda_T^{\frac14}$, 
\begin{align}d_{{Kol}}\left(\frac{K_T(z)}{\frac{1}{\sigma}\sqrt{\mathbb{E}\left[K_T^2(z)\right]}}
, \mathcal{N}(0,\sigma^2) \right)\leq C\left[\phi\left( V_T+F_T\right) +\frac{\|S_T\|_{L^2(\Omega)}+\|U_T\|_{L^2(\Omega)}}{\sqrt{\lambda_T}}\right].\label{upper-M-Kol}
\end{align}
Thus, in view of \eqref{{Kol}-F-z<} and \eqref{upper-M-Kol}, we deduce that,
 for
every   $\lambda_{T}>\lambda_{T_0}$,
\begin{align}
 &\sup_{\left\{0\leq
z\leq\lambda_T^{\frac14}\right\}}\left|\mathbb{P}\left\{Q_T \leq
 z\right\}-\mathbb{P}\{ \mathcal{N}(0,\sigma^2)  \leq z\}\right| \nonumber\\
&\leq
C \left[\phi\left( V_T+F_T\right)  +{\lambda_T^{\frac14}}\left|\frac{1}{\rho
\sqrt{\lambda_T}}\mathbb{E}G_T-1\right|+ \left| \mathbb{E}\left[( G_T-\mathbb{E} G_T)^2\right] -
\sigma_1^2\right|+ \left| \mathbb{E}\left[F_T^2\right] -
\sigma_2^2\right|\right.\nonumber\\&\quad\left.+\frac{\|S_T\|_{L^2(\Omega)}+\|U_T\|_{L^2(\Omega)}+|\mu_T|}{\sqrt{\lambda_T}}\right].\label{Kol-Q1}
\end{align}
Furthermore, in view of \eqref{Kol-Q1}  and the
fact that,   for every $ x>0$,   \[\mathbb{P}\left\{\mathcal{N}(0,\sigma^2) \geq x\right\}=\int_{x}^{\infty}\frac{1}{\sqrt{2\pi}\sigma} e^{-\frac{y^2}{2\sigma^2}}dy
\leq e^{-\frac{x^2}{4\sigma^2}}\int_{x}^{\infty}\frac{1}{\sqrt{4\pi}\sigma} e^{-\frac{y^2}{2\sigma^2}}dy \leq C
e^{-\frac{x^2}{4\sigma^2}},\] we  obtain
\begin{eqnarray}
&&\sup_{\left\{z>\lambda_T^{\frac14}\right\}}\left|\mathbb{P}\left\{Q_T \leq z\right\}-\mathbb{P}\{ \mathcal{N}(0,\sigma^2) 
\leq z\}\right|=\sup_{\left\{z>\lambda_T^{\frac14}\right\}}\left|\mathbb{P}\left\{Q_T > z\right\}-\mathbb{P}\{ N(0,\sigma^2) 
> z\}\right|\nonumber\\ &\leq& \mathbb{P}\left\{Q_T
  > \lambda_T^{\frac14}\right\}+\mathbb{P}\left\{ \mathcal{N}(0,\sigma^2)  > \lambda_T^{\frac14}\right\}\nonumber \\
&\leq& \left|\mathbb{P}\left\{Q_T \leq
\lambda_T^{\frac14}\right\}-\mathbb{P}\left\{ \mathcal{N}(0,\sigma^2)  \leq
\lambda_T^{\frac14}\right\}\right|
+2 \mathbb{P}\left\{ \mathcal{N}(0,\sigma^2)  \geq \lambda_T^{\frac14}\right\}\nonumber \\
 &\leq& \sup_{\left\{0\leq
z\leq\lambda_T^{\frac14}\right\}}\left|\mathbb{P}\left\{Q_T \leq
 z\right\}-\mathbb{P}\{ \mathcal{N}(0,\sigma^2)  \leq z\}\right|+Ce^{-\frac{\sqrt{\lambda_T}}{4\sigma^2}}\nonumber\\
&\leq&
C \left[\phi\left( V_T+F_T\right)  +{\lambda_T^{\frac14}}\left|\frac{1}{\rho
\sqrt{\lambda_T}}\mathbb{E}G_T-1\right|+ \left| \mathbb{E}\left[( G_T-\mathbb{E} G_T)^2\right] -
\sigma_1^2\right|+ \left| \mathbb{E}\left[F_T^2\right] -
\sigma_2^2\right|\right.\nonumber\\&&\quad\left.+\frac{\|S_T\|_{L^2(\Omega)}+\|U_T\|_{L^2(\Omega)}+|a_T|}{\sqrt{\lambda_T}}\right].\label{Kol-Q2}
\end{eqnarray}
Now let us study the case when $z<0$. Suppose   $-\lambda_T^{\frac14}\leq z<0$, then, following similar arguments as in the above case when $0<z\leq\lambda_T^{\frac14}$, there exists $\lambda_{T_0}$ such that, for
every   $\lambda_{T}>\lambda_{T_0}$,
\begin{align}
 &\sup_{\left\{-\lambda_T^{\frac14}\leq z<0\right\}}\left|\mathbb{P}\left\{Q_T \leq
 z\right\}-\mathbb{P}\{ \mathcal{N}(0,\sigma^2)  \leq z\}\right| \nonumber\\
&\leq
C \left[\phi\left( V_T+F_T\right)  +{\lambda_T^{\frac14}}\left|\frac{1}{\rho
\sqrt{\lambda_T}}\mathbb{E}G_T-1\right|+ \left| \mathbb{E}\left[( G_T-\mathbb{E} G_T)^2\right] -
\sigma_1^2\right|+ \left| \mathbb{E}\left[F_T^2\right] -
\sigma_2^2\right|\right.\nonumber\\&\quad\left.+\frac{\|S_T\|_{L^2(\Omega)}+\|U_T\|_{L^2(\Omega)}+|\mu_T|}{\sqrt{\lambda_T}}\right],\label{Kol-Q1-z-neg}
\end{align}
where we used $\left(1-\frac{z}{\rho \sqrt{\lambda_T}}\right)\geq1$ and $\left(1-\frac{z}{\rho \sqrt{\lambda_T}}\right)\leq 2$ for $\lambda_T>\frac{1}{\rho ^4}$.\\
Further, combining \eqref{Kol-Q1-z-neg}  and the
fact that  \[\mathbb{P}\left\{\mathcal{N}(0,\sigma^2) \leq -x\right\}=\int_{-\infty}^{-x}\frac{1}{\sqrt{2\pi}\sigma} e^{-\frac{y^2}{2\sigma^2}}dy=\int_{x}^{\infty}\frac{1}{\sqrt{2\pi}\sigma} e^{-\frac{y^2}{2\sigma^2}}dy
 \leq C
e^{-\frac{x^2}{4\sigma^2}} \mbox{ for all } x>0 ,\] we deduce 
\begin{eqnarray}
&&\sup_{\left\{z<-\lambda_T^{\frac14}\right\}}\left|\mathbb{P}\left\{Q_T \leq z\right\}-\mathbb{P}\{ \mathcal{N}(0,\sigma^2) 
\leq z\}\right|\nonumber\\ &\leq& \mathbb{P}\left\{Q_T
  \leq - \lambda_T^{\frac14}\right\}+\mathbb{P}\left\{ \mathcal{N}(0,\sigma^2)  \leq - \lambda_T^{\frac14}\right\}\nonumber \\
&\leq& \left|\mathbb{P}\left\{Q_T \leq
-\lambda_T^{\frac14}\right\}-\mathbb{P}\left\{ \mathcal{N}(0,\sigma^2)  \leq
-\lambda_T^{\frac14}\right\}\right|
+2 \mathbb{P}\left\{ \mathcal{N}(0,\sigma^2)  \leq -\lambda_T^{\frac14}\right\}\nonumber \\
 &\leq& \sup_{\left\{-\lambda_T^{\frac14}\leq z<0\right\}}\left|\mathbb{P}\left\{Q_T \leq
 z\right\}-\mathbb{P}\{ \mathcal{N}(0,\sigma^2)  \leq z\}\right|+Ce^{-\frac{\sqrt{\lambda_T}}{4\sigma^2}}\nonumber\\
&\leq&
C \left[\phi\left( V_T+F_T\right)  +{\lambda_T^{\frac14}}\left|\frac{1}{\rho
\sqrt{\lambda_T}}\mathbb{E}G_T-1\right|+ \left| \mathbb{E}\left[( G_T-\mathbb{E} G_T)^2\right] -
\sigma_1^2\right|+ \left| \mathbb{E}\left[F_T^2\right] -
\sigma_2^2\right|\right.\nonumber\\&&\quad\left.+\frac{\|S_T\|_{L^2(\Omega)}+\|U_T\|_{L^2(\Omega)}+|\mu_T|}{\sqrt{\lambda_T}}\right].\label{Kol-Q2-z-neg}
\end{eqnarray}
 Therefore, in view of \eqref{Kol-Q1}, \eqref{Kol-Q2}, \eqref{Kol-Q1-z-neg} and \eqref{Kol-Q2-z-neg}, we conclude that
\begin{eqnarray*}
&&\sup_{\{z\in\mathbb{R}\}}\left|\mathbb{P}\left\{Q_T \leq
z\right\}-\mathbb{P}\{ \mathcal{N}(0,\sigma^2)  \leq z\}\right|\nonumber\\ 
&&\leq
C \left[\phi\left( V_T+F_T\right)  +{\lambda_T^{\frac14}}\left|\frac{1}{\rho
\sqrt{\lambda_T}}\mathbb{E}G_T-1\right|+ \left| \mathbb{E}\left[( G_T-\mathbb{E} G_T)^2\right] -
\sigma_1^2\right|+ \left| \mathbb{E}\left[F_T^2\right] -
\sigma_2^2\right|\right.\nonumber\\&&\quad\left.+\frac{\|S_T\|_{L^2(\Omega)}+\|U_T\|_{L^2(\Omega)}+|\mu_T|}{\sqrt{\lambda_T}}\right],
\end{eqnarray*}
which completes the proof of Theorem \ref{main-thm}. 
\end{proof}

\end{document}